\theoremstyle{Lehn-up}
\renewcommand{\epsilon}{\varepsilon}
\renewcommand{\phi}{\varphi}
\newcommand{\Diff}{\mathrm{Diff}}
\newcommand{\sing}[1]{{{#1}_{\mathrm{sing}}}}
\renewcommand{\tilde}{\widetilde}
\title{Geography of Gorenstein stable log surfaces}
\author{Wenfei Liu}
\address{Wenfei Liu \\Institut f\"ur algebraische Geometrie\\  Gottfried Wilhelm Leibniz Universit\"at Hannover\\Welfengarten 1\\ 30167 Hannover\\Germany}
\email{wliu@math.uni-hannover.de}
\author{S\"onke Rollenske}
\address{S\"onke Rollenske\\Fakult\"at f\"ur Mathematik\\Universt\"at Bielefeld\\Universit\"atsstr. 25\\33615 Bielefeld\\Germany}
\email{rollenske@math.uni-bielefeld.de}
\begin{document}
\begin{abstract}
We study the geography of Gorenstein stable log surfaces and prove two inequalities for their invariants: the stable Noether inequality and the $P_2$-inequality.

By constructing examples we show that all  invariants are realised except possibly some cases where the inequalities become equalities.

\end{abstract}
\subjclass[2010]{14J10, 14J29}
\keywords{stable surface, stable log surface,  geography of surfaces}

\maketitle
\setcounter{tocdepth}{1}
\tableofcontents

\section{Introduction}
The classification of algebraic surfaces has been a subject of interest in algebraic geometry ever since the foundational work of the Italian school at the beginning of last century and the complexity of this endeavour led Castelnuovo and Enriques to their saying: ``If curves have been made by God, then surfaces are the devil's mischief.''
For surfaces of general type, one aspect is their \emph{geography}, that is, the question about general restrictions on their invariants and the construction of surfaces realising all possible invariants.

The compactification $\overline{\gothM}_{a,b}$ of Gieseker's moduli space of canonical models of surfaces of general type with $a=K_X^2$ and $b= \chi(\ko_X)$ parametrises stable surfaces (see Definition~\ref{defin: slc}). That such surfaces  should be the correct higher-dimensional analogue of stable curves was first suggested  by  Koll\'ar and Shepherd-Barron \cite{ksb88}; formidable technical obstacle delayed the actual construction of the moduli space for several decades~\cite{KollarModuli}.
While it is still true that the Gieseker moduli space ${\gothM}_{a,b}$ is an open subset of $\overline{\gothM}_{a,b}$, the complement is no longer a divisor as in the moduli space of stable curves: there can be additional irreducible components and for some invariants ${\gothM}_{a,b}$ might be empty while  $\overline{\gothM}_{a,b}$ is not. This simply means that the invariants of some stable surfaces cannot be realised by surfaces of general type. 

It is actually  quite natural  to consider  also stable log surfaces (Definition~\ref{defin: slc}), where we allow a reduced boundary.  As a first step beyond the classical case we prove  two fundamental inequalities for Gorenstein stable log surfaces.

\begin{custom}[$P_2$-inequality (Theorem \ref{thm: P_2})]

Let $(X,\Delta)$ be a connected Gorenstein stable log surface. Then 
\[ \chi(X,\omega_X(\Delta))=\chi(\ko_X(-\Delta)) \geq -(K_X+\Delta)^2,\]
and equality holds if and only if $\Delta=0$ and $P_2(X)= h^0(X, \omega_X^{\tensor 2})=0$.
\end{custom}

\begin{custom}[Stable log Noether inequality (Theorem \ref{thm: log noether}, Corollary~\ref{cor: log noether})]
 Let $(X,\Delta)$ be a connected Gorenstein stable log surface. Then
\begin{gather*}
p_g(X, \Delta) = h^0(X, \omega_X(\Delta)) \leq (K_X+\Delta)^2 +2,\\
\chi(X,\omega_X(\Delta)) \leq (K_X+\Delta)^2 +2,
\end{gather*}
and the first inequality is strict if $\Delta=0$.
\end{custom}
In both cases, the strategy is to use well known results in the normal respectively smooth case. For the $P_2$-inequality this is relatively straightforward (apart from a small issue with adjunction) while for the Noether inequality one needs to control the combinatorics of the glueing process carefully.

In contrast to the case of minimal surfaces of general type, most of the possible  invariants are realised by a simple combinatorial construction explained in Section \ref{sect: big example}. All these examples are locally smoothable but global smoothability may or may not occur (see Section \ref{sect: smoothability}).

To put these results in context, let us discuss the known restrictions on invariants for some classes of surfaces with empty boundary. In the following, $X$ will always denote a surface of the specified type. In all cases $K_X$ is an ample $\IQ$-Cartier divisor so we have $K_X^2 >0$, which may however be a rational number if $X$ is not Gorenstein. 

\begin{description}[leftmargin=1cm, labelindent=.5cm]
 \item[Minimal surfaces of general type] The following well-known inequalities are satisfied:
\begin{description}[leftmargin=*, labelindent=.5cm]
 \item[{Euler characteristic}] $\chi(\ko_X)>0$.
\item[Noether inequality] $p_g(X)\leq \frac 1 2 K_X^2+2$ (or $K_X^2\geq 2\chi(\ko_X)-6$).
\item[Bogomolov--Miyaoka--Yau inequality]  $K_X^2\leq 9\chi(\ko_X)$.
\end{description}
A proof of these inequalities and references showing that almost all possible invariants are known to be realised can be found in  \cite[Ch.\ VII]{BHPV}.
\item[Normal stable surfaces]
It has been proved by Blache that also in this case $\chi(\ko_X)>0$ \cite[Thm.~2]{bla94}. There is an analogue of the Bogomolov-Miyaoka-Yau inequality (see \cite{langer03} and references therein) that can be stated in terms of the orbifold Euler-characteristic
\[ K_X^2\leq 3 e_{\mathrm{orb}}(X).\]
However,  the orbifold Euler-characteristic is not invariant under deformation so it is less suited to the moduli point of view. We show that both the classical Noether-inequality and the classical Bogomolov--Miyaoka--Yau inequality fail for normal Gorenstein stable surfaces  in Section \ref{sect: exam normal}. 
\item[General case] It is known that 
\[\left\{K_X^2\mid X\text{ stable surface}\right\}\]
is a DCC set, bounded below by $1/1726$ \cite{AM04, kollar94} but our understanding is far from complete. For example, it is very difficult to bound the index for surfaces with fixed invariants.

We expect that a kind of Noether-inequality holds also in this case, see Remark \ref{rem: noether sharp}.
\end{description}
Our results for Gorenstein stable surfaces without boundary are illustrated in Figure \ref{fig: geography}, where we also mark the points where an explicit example has been constructed. 
\begin{figure}[ht]
\scriptsize
\begin{tikzpicture}
[scale=.3, 
axes/.style={},
classical/.style={thick, green!50!white},
stable/.style={thick},
pt/.style={circle,draw, fill=black, size=1mm}
]

\begin{scope}[stable]
 \fill[orange!30!white] (-18, 18) -- (-1,1) -- (3,1) -- (20, 18) -- cycle;

\begin{scope}[classical]
\fill[green!20!white] (12, 18) -- (4, 2) -- (3,1) -- (1,1) -- (1,9) -- (2, 18) -- cycle;
\draw (3, 1) -- (4,2) -- (12, 18) ;
\draw (1, 9) -- (2,18);

 \draw (3, 1)--(1,1) -- (1,9); 
\end{scope}
 \draw (12, 18)  node[above] {Noether};
\draw (2,18) node[above] {BMY};

 \draw (-1, 1)--(3,1);
\draw (-1, 1) -- (-18,18)  node[left] {$P_2=0$};
\draw (3, 1) -- (20,18) node[above] {stable Noether};

\end{scope}

\begin{scope}[axes]
\draw[->]  (-19,0)--(19,0) node[below] {$\chi(\ko_X)$};
\draw[->]  (0,-1)--(0,19) node[left]  {$K_X^2$};
\end{scope}

\foreach \y in {1,...,18}{
\foreach \x in {-18,...,20}{
\ifnum -\x<\y
{
 \ifnum \numexpr \x-\y<2
\filldraw[blue] (\x, \y) circle (1mm);
 \fi
}
\fi
}}
\filldraw[blue] (3,1) circle (1.2mm);
\filldraw[blue] (4,2) circle (1.2mm);
\node at (-1,1) {$\emptyset$};

\node at (0,-2) [rectangle, rounded corners, fill=green!20!white] {minimal surfaces};
\node at (-12,-2) [rectangle, rounded corners, fill=orange!30!white] { Gorenstein stable  surfaces};
\filldraw[blue]    (6,-2) circle (1.2mm) node  [right, black] 
{Constructed in Sect.\ \ref{sect: big example}};
\end{tikzpicture}
 \caption{The geography of minimal and Gorenstein stable surfaces}\label{fig: geography}
\end{figure}
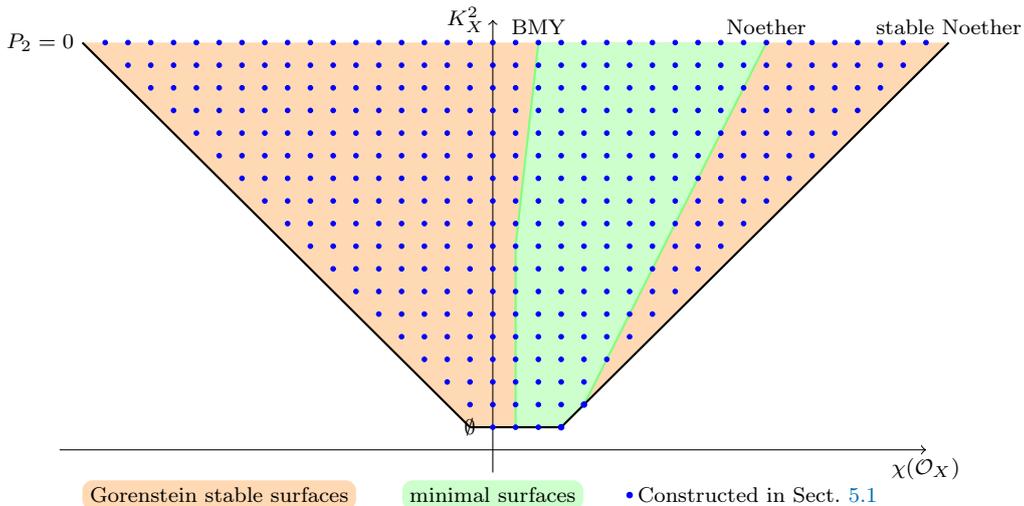

The stable log Noether inequality is sharp and we give a partial characterisation of surfaces on the stable log Noether line in Corollary \ref{cor: nonnormal equality}. On the other hand, we have some evidence to believe that there are no Gorenstein stable surfaces with $\chi(\ko_X)-2=K_X^2\geq 3$.

Surfaces with  negative $\chi(\ko_X)$ are a bit more mysterious. It can be shown that there is no surface with $K_X^2=1$ and $P_2(X)=0$ \cite{fpr13} so one might wonder if always $P_2(X)>0$.

If both the above speculations on the sharpness of the inequalities turn out to be true then every 
possible invariant is realised by the examples in Section \ref{sect: big example}.

In the last section we give some further examples that illustrate some of the obstacles in working with stable surfaces. For example, the classical approach to prove Noether's inequality would be to look at the image of the canonical map. We show that for a stable surface this image may be disconnected or not equidimensional which made a different approach necessary. We also include an example of a 1-dimensional family of stable surfaces with $\chi(\ko_X)=1$ and $K_X^2=9$, fake fake projective planes,  which confirms that there is in general no direct relation between stable surfaces and minimal surfaces with the same invariants.

\subsection*{Acknowledgements:} We are grateful to Fabrizio Catanese, Marco Franciosi, Christian Liedtke, Michael L\"onne and Rita Pardini for interesting discussions about this project. J\'anos Koll\'ar sent us a preliminary version of \cite{KollarSMMP}.  Matthias Sch\"utt suggested to construct a family of fake fake projective planes. 

Both authors were supported by DFG via the second author's Emmy-Noether project and partially via SFB 701.  The first author was supported  by the Bielefelder Nachwuchsfonds.

\subsection{Notations and conventions}
We work exclusively with schemes of finite type over the complex numbers.
\begin{itemize}
\item The singular locus of a scheme $X$ will be denoted by $\sing X$.
\item A surface is a reduced, projective scheme  of pure dimension two but not necessarily irreducible or connected.

\item A curve is a purely 1-dimensional scheme that  is Cohen--Macaulay. A curve is not assumed to be reduced, irreducible or connected; its arithmetic genus is  $p_a(C) = 1-\chi(\ko_C)$. 
\item By abuse of notation we sometimes do not distinguish a divisor $D$ and the associated divisorial sheaf $\ko_X(D)$; this is especially harmless for Cartier divisors.
\end{itemize}

\section{Preliminaries}
In this section we recall some necessary notions as well as constructions that we need throughout the text. Most of these are available in all dimensions, but for our purpose it suffices to focus on the case of surfaces. Our main reference is \cite[Sect.~5.1--5.3]{KollarSMMP}.

\subsection{Stable log surfaces}
Let $X$ be a demi-normal surface, that is,  $X$ satisfies $S_2$ and  at each point of codimension one $X$ is either regular or has an ordinary double point.
We denote by  $\pi\colon \bar X \to X$ the normalisation of $X$. The conductor ideal
$ \shom_{\ko_X}(\pi_*\ko_{\bar X}, \ko_X)$
is an ideal sheaf in both $\ko_X$ and $\ko_{\bar X} $ and as such defines subschemes
$D\subset X \text{ and } \bar D\subset \bar X,$
both reduced and of pure codimension 1; we often refer to $D$ as the non-normal locus of $X$. 

Let $\Delta$ be a reduced curve on $X$ whose support does not contain any irreducible component of $D$. Then the strict transform $\bar \Delta$ in the normalisation is well defined.
\begin{defin}\label{defin: slc}
We call a pair $(X, \Delta)$ as above a  \emph{log surface}; $\Delta$ is called the (reduced) boundary.\footnote{In general one can allow rational coefficients in $\Delta$, but we will not use this here.}

A log surface $(X,\Delta)$ is said to have \emph{semi-log-canonical (slc)}  singularities if it satisfies the following conditions: 
\begin{enumerate}
 \item $K_X + \Delta$ is $\IQ$-Cartier, that is, $m(K_X+\Delta)$ is Cartier for some $m\in\IZ^{>0}$; the minimal such $m$ is called the (global) index of $(X,\Delta)$.
\item The pair $(\bar X, \bar D+\bar \Delta)$ has log-canonical singularities. 
\end{enumerate}
The pair $(X,\Delta)$ is called stable log surface if in addition $K_X+\Delta$ is ample. A stable surface is a stable log surface with empty boundary.

By abuse of notation we say $(X, \Delta)$ is a Gorenstein stable log surface if  the index is equal to one, i.e., $K_X+\Delta$ is an ample Cartier divisor.
\end{defin}

 Since $X$ has at most double points in codimension one the map $\pi\colon \bar D \to D$ on the conductor divisors is generically a double cover and thus  induce a rational involution on $\bar D$. Normalising the conductor loci we get an honest involution $\tau\colon \bar D^\nu\to \bar D^\nu$ such that $D^\nu = \bar D^\nu/\tau$.

To state the next result we need the notion of \emph{different}, which is the correction term in the adjunction formula on a log surface.
\begin{defin}[{\cite[Definition~4.2]{KollarSMMP}}]\label{def: different}
Let $(X, \Delta)$ be a log surface and $B$ a reduced curve on $X$ that does not contain any irreducible component of the non-normal locus $D$. Suppose $\omega_X(\Delta + B)^{[m]}$ is a line bundle for some positive integer $m$. Then, denoting by $B^\nu$  the normalisation of $B$, the different $\Diff_{B^\nu}(\Delta)$ is the uniquely determined $\IQ$-divisor on $B^\nu$ such that $m\Diff_{B^\nu}(\Delta)$ is integral and the residue map induces an isomorphism
\[
 \omega_X(\Delta + B)^{[m]}\restr{B^\nu}\isom \omega_{B^\nu}^{[m]}(m\Diff_{B^\nu}(\Delta)).
\]
\end{defin}

\begin{theo}[{\cite[Thm.~5.13]{KollarSMMP}}]\label{thm: triple}
Associating to a log-surface $(X, \Delta)$ the triple $(\bar X, \bar D+\bar \Delta, \tau\colon \bar D^\nu\to \bar D^\nu)$ induces a one-to-one correspondence
 \[
  \left\{ \text{\begin{minipage}{.12\textwidth}
 \begin{center}
         stable log surfaces  $(X, \Delta)$
 \end{center}
         \end{minipage}}
 \right\} \leftrightarrow
 \left\{ (\bar X, \bar D, \tau)\left|\,\text{\begin{minipage}{.37\textwidth}
   $(\bar X, \bar D+\bar \Delta)$ log-canonical pair with 
  $K_{\bar X}+\bar D+\bar \Delta$ ample, \\
   $\tau\colon \bar D^\nu\to \bar D^\nu$  an involution s.th.\
    $\Diff_{\bar D^\nu}(\Delta)$ is $\tau$-invariant.
            \end{minipage}}\right.
 \right\}.
 \]
 \end{theo}

An important consequence, which allows to understand the geometry of stable log surfaces from the normalisation, is that
\begin{equation}\label{diagr: pushout}
\begin{tikzcd}
    \bar X \dar{\pi}\rar[hookleftarrow] & \bar D\dar{\pi} & \bar D^\nu \lar[swap]{\nu}\dar{/\tau}
    \\
X\rar[hookleftarrow] &D &D^\nu\lar
    \end{tikzcd}
\end{equation}
is a pushout diagram.

\begin{defin}
 Let $(X, \Delta)$ be a stable log surface. We call
\[ p_g(X, \Delta) = h^0(X, \omega_X(\Delta)) = h^2(X, \ko_X(-\Delta))\]
the geometric genus of $(X, \Delta)$ and
\[ q(X, \Delta) = h^1(X, \omega_X(\Delta)) = h^1(X, \ko_X(-\Delta))\]
the irregularity of $(X, \Delta)$. If $\Delta$ is empty we omit it from the notation.
\end{defin}
Note that in both cases for the second equality we have used \cite[Lem.~3.3]{liu-rollenske12} and duality.

We will want to relate the invariants of a stable log surface with the invariants of the normalisation.
\begin{prop}\label{prop: invariants}
 Let $(X,\Delta)$ be a stable log surface with normalisation $(\bar X,\bar \Delta)$. Then
 $(K_X+\Delta)^2 = (K_{\bar X}+\bar D+\bar \Delta)^2$  and  $\chi(\ko_X) = \chi(\ko_{\bar X})+\chi(\ko_D)-\chi(\ko_{\bar D})$.
\end{prop}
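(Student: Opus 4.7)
For the first equality, the plan is to establish and then pull back the log-canonical formula
\[
\pi^{*}(K_X+\Delta) \;=\; K_{\bar X} + \bar D + \bar\Delta.
\]
This combines two ingredients. First, the standard conductor formula for the normalisation of a demi-normal surface, $K_{\bar X}+\bar D = \pi^{*}K_X$, which is part of the basic slc theory (see \cite[Sect.~5.1]{KollarSMMP}). Second, because the support of $\Delta$ contains no component of the non-normal locus $D$, the morphism $\pi$ is an isomorphism over a neighbourhood of $\Delta$, so $\pi^{*}\Delta = \bar\Delta$. With this identity in hand, the first equality falls out: $\pi$ is finite birational of degree $1$, so for any $\IQ$-Cartier divisor $L$ on $X$ one has $(\pi^{*}L)^{2} = L^{2}$ (clear denominators to make it Cartier and apply the projection formula).

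For the second equality, I would exploit the pushout diagram \eqref{diagr: pushout} at the level of structure sheaves. Demi-normality means $X$ has only ordinary double points in codimension one, and an étale-local check there produces the short exact sequence
\[
0 \longrightarrow \ko_X \longrightarrow \pi_{*}\ko_{\bar X} \oplus \ko_D \longrightarrow \pi_{*}\ko_{\bar D} \longrightarrow 0,
\]
where the first arrow is $f \mapsto (\pi^{\sharp}f,\, f|_D)$ and the second is $(a,b) \mapsto a|_{\bar D} - \pi^{\sharp}b$. Taking Euler characteristics and using that $\pi$ and $\pi|_{\bar D}$ are finite, so $\chi(\pi_{*}\mathcal F) = \chi(\mathcal F)$, gives exactly
\[
\chi(\ko_X) = \chi(\ko_{\bar X}) + \chi(\ko_D) - \chi(\ko_{\bar D}).
\]

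The only genuinely delicate step is the log pullback formula for the normalisation. It requires that the conductor appears with multiplicity one on both sides, which in turn relies on $X$ being nodal in codimension one; the rest is a routine application of the projection formula and additivity of $\chi$ on short exact sequences, so I do not expect obstacles beyond citing \cite{KollarSMMP}.
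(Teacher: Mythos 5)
Your proposal is correct and takes essentially the same route as the paper: the paper declares the first equality clear (it is exactly the standard log pullback formula $\pi^*(K_X+\Delta)=K_{\bar X}+\bar D+\bar\Delta$ you invoke), and it derives the second from the identity $\pi_*\ko_{\bar X}(-\bar D)=\ki_D$ via two short exact sequences, which is precisely your Mayer--Vietoris sequence split into its two constituent pieces. One small caveat: your justification that ``$\pi$ is an isomorphism over a neighbourhood of $\Delta$'' is not literally true, since $\Delta$ may meet the non-normal locus $D$ in finitely many points (the paper's Noether argument explicitly deals with such intersection points); what is true is that $\pi$ is an isomorphism over a dense open subset of each component of $\Delta$, which identifies the strict transform, and in any case the combined formula for $\pi^*(K_X+\Delta)$ is the standard one from \cite[Sect.~5.1]{KollarSMMP} that you cite.
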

\begin{proof}
 The first part is clear. For the second note that the conductor ideal defines $\bar D$ on $\bar X$ and the non-normal locus $D$ on $X$. In particular, $\pi_*\ko_{\bar X}(-\bar D)=\ki_D$ and    additivity of the Euler characteristic for the two sequences
\begin{gather*}
 0\to \ko_{\bar X}(-\bar D)\to \ko_{\bar X}\to \ko_{\bar D}\to 0,\\
0\to \pi_*\ko_{\bar X}(-\bar D)\to \ko_{ X}\to \ko_{D}\to 0
\end{gather*}
gives the claimed result.
\end{proof}

To compare the irregularity and the geometric genus is more subtle. We state the following general result in the Gorenstein case for simplicity.
\begin{prop}[\protect{\cite[Prop.~5.8]{KollarSMMP}}]\label{prop: descend section}
If $(X, \Delta)$ is a Gorenstein log surface then 
 $\pi^*H^0(X, \omega_X(\Delta))\subset H^0(\bar X, \omega_{\bar X}(\bar D+\bar \Delta))$ is the subspace of those sections $s$  such that the residue of $s$ in $H^0(\bar D^\nu, \omega_{\bar D^\nu}(\Diff_{\bar D^\nu}(\Delta)))$  is $\tau$-anti-invariant.
 \end{prop}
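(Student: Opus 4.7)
The plan is to use the pushout diagram~\eqref{diagr: pushout} to realise $\omega_X(\Delta)$ as a subsheaf of $\pi_*\omega_{\bar X}(\bar D+\bar\Delta)$ cut out by a descent condition along the conductor, and then to verify that this condition is precisely $\tau$-anti-invariance of the residue via an explicit local calculation at a generic node of $D$.

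Since $(X,\Delta)$ is Gorenstein and $X$ has at worst ordinary double points in codimension one, the pullback formula gives $\pi^*(K_X+\Delta) = K_{\bar X}+\bar D+\bar\Delta$ as Cartier divisors, hence $\pi^*\omega_X(\Delta)\cong \omega_{\bar X}(\bar D+\bar\Delta)$ as line bundles. This yields the injection on global sections, and adjunction (Definition~\ref{def: different}) supplies the residue map
\[
\mathrm{Res}\colon H^0(\bar X,\omega_{\bar X}(\bar D+\bar\Delta)) \to H^0(\bar D^\nu,\omega_{\bar D^\nu}(\Diff_{\bar D^\nu}(\Delta))).
\]

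The heart of the argument is a local model at a generic point of $D$: \'etale-locally $X$ is isomorphic to $\{xy=0\}$ in affine $3$-space with coordinates $(x,y,z)$, the normalisation $\bar X$ is the disjoint union of the planes $\{x=0\}$ and $\{y=0\}$, and $\tau$ swaps the two preimages of a point of $D=\{x=y=0\}$. The Gorenstein generator $\eta$ of $\omega_X$ here is the Poincar\'e residue of $dx\wedge dy\wedge dz/(xy)$; it pulls back to $(dy\wedge dz)/y$ on one branch and $-(dx\wedge dz)/x$ on the other, whose residues along the conductor are $+dz$ and $-dz$ respectively. Since $\tau$ exchanges the branches, $\mathrm{Res}(\pi^*\eta)$ is $\tau$-anti-invariant; as any local section is $f\eta$ with $f$ regular on $X$, and $\bar\Delta$ is disjoint from $\bar D$ in $\bar X$, anti-invariance persists for every section pulled back from $X$.

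For the converse, $\omega_X(\Delta)$ fits in an exact sequence
\[
0 \to \omega_X(\Delta) \to \pi_*\omega_{\bar X}(\bar D+\bar\Delta) \to \mathcal Q \to 0
\]
with cokernel $\mathcal Q$ supported on $D$, and the local computation above identifies $\mathcal Q$ via $\mathrm{Res}$ with the $\tau$-invariant (trace) quotient of $\nu_*\omega_{\bar D^\nu}(\Diff_{\bar D^\nu}(\Delta))$ along $D^\nu$. Hence a section of $\omega_{\bar X}(\bar D+\bar\Delta)$ descends to $\omega_X(\Delta)$ exactly when its image in $\mathcal Q$ vanishes, i.e.\ when its residue is $\tau$-anti-invariant. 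The main subtlety is precisely the sign that turns naive descent into anti-invariance, coming from the asymmetry of the Poincar\'e residue across the two branches of a node; once this is settled the identification globalises without trouble, since $D^\nu$ and $\bar D^\nu$ are smooth curves and $\bar D^\nu\to D^\nu$ is generically the \'etale quotient by $\tau$.
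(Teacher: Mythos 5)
The paper does not prove this proposition at all: it is quoted verbatim from \cite[Prop.~5.8]{KollarSMMP}, so there is no in-text argument to compare yours against. What you have written is, in essence, the standard proof (and the one underlying Koll\'ar's): a Rosenlicht-type local computation at a general point of the double locus, where the generator of $\omega_X$ for $\{xy=0\}$ pulls back to residues $+dz$ and $-dz$ on the two branches, forcing anti-invariance rather than invariance; plus the observation that descent of a section is equivalent to the vanishing of its image in the cokernel of $\omega_X(\Delta)\into\pi_*\omega_{\bar X}(\bar D+\bar\Delta)$. The sign analysis, which is the real content, is correct.

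Two caveats. First, your assertion that $\bar\Delta$ is disjoint from $\bar D$ is false in general: the hypothesis is only that $\Delta$ contains no irreducible \emph{component} of $D$, and the paper explicitly allows (indeed uses, in Section~\ref{sect: setup noether}) intersection points of $\Delta$ with $D$. This does not break your argument, because the residue is a section of a line bundle on the reduced curve $\bar D^\nu$ and $\tau$-anti-invariance is a closed condition that can be verified at the generic points of the components of $\bar D$, where your local model is valid; but the justification should be phrased that way rather than via a disjointness that need not hold. Second, the claim that ``the identification globalises without trouble'' is doing real work: to conclude that checking the descent condition only at codimension-one points of $X$ suffices, you need that $\omega_X(\Delta)$ is determined by its localisations in codimension one, i.e.\ that it is $S_2$. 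In the Gorenstein case this is immediate ($\omega_X(\Delta)$ is a line bundle on the $S_2$ surface $X$), but it should be said, since it is exactly the step that lets you ignore the zero-dimensional special locus (pinch points, degenerate cusps, points of $\Delta\cap D$) where your local model does not apply.
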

\begin{rem}\label{rem: pushout sections}
We need the following consequence of the above proposition.
 Assume that $X=X_1\cup X_2$ is the union of two (not necessarily irreducible) surfaces such that the conductor is a smooth curve and let $C=X_1\cap X_2$. With $\Delta_i = \Delta\restr X_i$ the  residue maps define homomorphisms 
\[r_{X_i}\colon  H^0(X_i, K_{X_i}+\Delta_i+C) \to H^0(C, (K_{X_i}+\Delta_i+C)\restr C) \isom H^0(C, K_{C}+\Diff_C(\Delta)).\]
Then there is a fibre product diagram of vector spaces
\[ \begin{tikzcd}
    H^0(X, K_X+\Delta) \rar\dar & H^0(X_1, K_{X_1}+\Delta_1+C)\dar{r_{X_1}}\\
H^0(X_2, K_{X_2}+\Delta_2+C)\rar{-r_{X_2}}& H^0(C, K_{C}+\Diff_C(\Delta))
   \end{tikzcd}.
\]
By abuse of notation we also write
\[
 H^0(X, K_X+\Delta) = H^0(X_1, K_{X_1}+\Delta_1+C) \times_C H^0(X_2, K_{X_2} + \Delta_2+C).
\]
\end{rem}

\subsection{Gorenstein slc singularities and semi-resolutions}
Normalising a demi-normal surface looses all information on the glueing in codimension one. Often it is better to work on a simpler but still non-normal surface.
\begin{defin}
A surface $X$ is called semi-smooth if every point of $X$ is either smooth or double normal crossing or a pinch point\footnote{A local model for the pinch point in $\IA^3$ is given by the equation $x^2+yz^2=0$.}. 

A morphism of demi-normal surfaces $f\colon Y\rightarrow X$ is called a semi-resolution if the following conditions are satisfied:
 \begin{enumerate}
  \item $Y$ is semi-smooth;
  \item $f$ is an isomorphism over the semi-smooth open subscheme of $X$;
  \item $f$ maps the singular locus of $Y$ birationally onto the non-normal of $X$.
 \end{enumerate}
 A semi-resolution $f\colon Y\rightarrow X$ is called minimal if no $(-1)$-curve is contracted by $f$, that is, there is no exceptional curve $E$ such that $E^2 =K_Y\cdot E = -1$.  
\end{defin}
Semi-resolutions always exist and one can also incorporate a boundary \cite[Sect.~10.5]{KollarSMMP}.

\begin{rem}[Classification of Gorenstein slc singularities]\label{rem: classification of sings}
Semi-log-canonical surface singularities have been classified in terms of their resolution graphs, at least for reduced boundary \cite{ksb88}. 

Let $x\in (X, \Delta)$ be a  Gorenstein slc singularity with minimal log semi-resolution $f\colon Y\to X$.   Then it is one of the following (see \cite[Ch.~4]{Kollar-Mori},  \cite[Sect.~3.3]{KollarSMMP}, and \cite[17]{kollar12}):
\begin{description}[leftmargin=1cm, labelindent=.5cm]
 \item[Gorenstein lc singularities, $\Delta=0$]In this case $x\in X$ is smooth, a
 canonical singularity, or a simple elliptic respectively cusp singularitiy. For the latter the resolution graph is a smooth elliptic curve, a nodal rational curve, or a cycle of smooth rational curves (see also \cite{laufer77} and \cite[Ch.~4]{Reid97}).
\item[Gorenstein lc singularities, $\Delta\neq 0$] Since the boundary is reduced, $\Delta$ has at most nodes. If $\Delta$ is smooth so is $X$ because of the Gorenstein assumption.

 If $\Delta$ has a node at $x$ then $x$ is a smooth point of $X$ or $(X, \Delta)$ is a general hyperplane section of a finite quotient singularity. In the minimal log resolution the dual graph of the exceptional curves is 
\[\bullet   \ {-}\ c_1  \ -\ \cdots  \ - \ c_n  \ {-}
\ \bullet \qquad (c_i\geq1)\]
where $c_i$ represents a smooth rational curve of self-intersection $-c_i$ and each $\bullet$ represents a (local) component of the strict transform of $\Delta$. 
If $c_i=1$ for some $i$ then $n=1$ and $\Delta$ is a normal crossing divisor in a smooth surface.
 \item[non-normal Gorenstein slc singularities, $\Delta=0$]
We describe the dual graph of the $f$-exceptional divisors over $x$: analytically locally $X$ consists of $k$ irreducible components, on each component we have  a resolution graph  as in the previous item, and these are glued together where the components intersect. In total we have a cycle of smooth rational curve. 

For example, the normalisation of the hypersurface singularity $T_{p, \infty, \infty} = \{ xyz+x^p=0\}$ ($p\geq 3$) consists of a plane and an $A_{p-2}$ singularity. The resolution graph of the semi-resolution is obtained by attaching 
\[\bullet   \ {-}\ 2  \ -\ \cdots  \ - \ 2  \ {-}
\ \bullet \text{ and }\bullet   \ {-}\ 1  \ {-}
\ \bullet  \]
to a circle. A more graphical example is given in Figure \ref{fig: semi-resolution}.
\item[non-normal Gorenstein slc singularities, $\Delta\neq 0$]
The difference to the previous case is that the local components are now glued in a chain and the ends of the chain intersect the strict transform of the boundary.
In this case $X$ itself might  not even be $\IQ$-Gorenstein.
\end{description}
\end{rem}

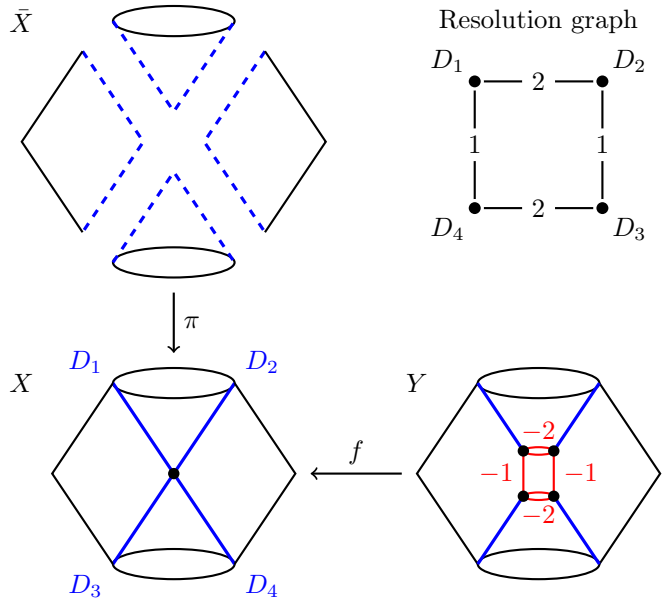
\begin{figure}\caption{Semi-resolution and normalisation of  a degenerate cusp glued from two planes and two $A_1$-singularities. }\label{fig: semi-resolution}
\small
 \begin{tikzpicture}
[thick,
codimtwo/.style = {fill, black, circle, minimum size = .15cm, inner sep = 0pt},
exceptional/.style = {red, thick}, 
nonnormal/.style ={very thick, blue},
boundary/.style={nonnormal, dashed},
boundarycomponent/.style = { fill = black,  minimum size = .2cm, inner sep = 0pt, circle, draw = white,very  thick},
exceptionalcomponent/.style = { fill = white, inner sep = 1.5pt, circle,},
scale = .4]

\begin{scope}[yshift=12cm] 
\node at (-5, 0) {$\bar X$};
 \draw (0,0) circle [x radius=2cm, y radius=.5cm];
\draw[boundary] (-2,0) -- (0, -3)--(2,0);

\draw (0,-8) circle [x radius=2cm, y radius=.5cm];
\draw[boundary] (-2,-8) -- (0, -5)--(2,-8);

\draw[boundary] (-3,-1) -- ++(2, -3)-- ++ (-2,-3);
\draw (-3,-1) -- ++(-2, -3)-- ++ (2,-3);
\draw[boundary] (3,-1) -- ++(-2, -3)-- ++ (2,-3);
\draw (3,-1) -- ++(2, -3)-- ++ (-2,-3);

\draw[->](0, -9) -- node[right]{$\pi$} ++ (0, -2);
\end{scope}

\begin{scope}
\node at (-5, 0) {$X$};
 \draw (0,0) circle [x radius=2cm, y radius=.5cm];
\draw (0,-6) circle [x radius=2cm, y radius=.5cm];
\draw[nonnormal] (-2,0) node[above left]{\small $D_1$} -- (2, -6) node[below right]{\small $D_4$};
\draw[nonnormal] (2,0) node[above right]{\small $D_2$} -- (-2, -6) node[below left]{\small $D_3$} ;

\draw (-2,0) -- ++(-2,-3)--++(2,-3);
\draw (2,0) -- ++(2,-3)--++(-2,-3);
\node at (0,-3)[codimtwo] {};
\end{scope}

\begin{scope}[xshift=12cm]
\node at (-4, 0) {$Y$};

 \draw (0,0) circle [x radius=2cm, y radius=.5cm];
\draw (0,-6) circle [x radius=2cm, y radius=.5cm];
\draw (-2,0) -- ++(-2,-3)--++(2,-3);
\draw (2,0) -- ++(2,-3)--++(-2,-3);

\draw[nonnormal] (-2,0) -- ++(1.5,-2.25);
\draw[nonnormal] (-2,-6) -- ++(1.5,2.25);
\draw[nonnormal] (2,0) -- ++(-1.5,-2.25);
\draw[nonnormal] (2,-6) -- ++(-1.5,2.25);
 \draw[exceptional] (0,-2.25) circle [x radius=.5cm, y radius=.125cm] node[above] {\small $-2$};
 \draw[exceptional] (0,-3.75) circle [x radius=.5cm, y radius=.125cm] node[below] {\small $-2$};
 \draw[exceptional] (-.5,-2.25) node[codimtwo] {}-- node[left]{\small $-1$} (-.5,-3.75) node[codimtwo] {};
 \draw[exceptional] (.5,-2.25) node[codimtwo] {}-- node[right]{\small $-1$}(.5,-3.75) node[codimtwo] {};

\draw[->](-4.5, -3) -- node[above]{$f$} ++ (-3,0);
\end{scope}

\begin{scope}[xshift = 12cm, yshift=10cm, ]
\node at (0,2) {Resolution graph}; 

\draw (0,0) node[exceptionalcomponent] {$2$}
-- ++ (2.1,0) node[boundarycomponent] {}node[above right]{ \small $D_2$}
-- ++ (0,-2.1) node[exceptionalcomponent] {$1$}
-- ++ (0,-2.1) node[boundarycomponent] {}node[below right]{ \small $D_3$}
-- ++ (-2.1,0) node[exceptionalcomponent] {$2$}
-- ++ (-2.1,0) node[boundarycomponent] {}node[below left]{ \small $D_4$}
-- ++ (0,2.1) node[exceptionalcomponent] {$1$}
-- ++ (0,2.1) node[boundarycomponent] {}node[above  left]{ \small $D_1$}
-- ++(2.1,0);
\end{scope}

\end{tikzpicture}
\end{figure}

\subsection{Intersection product and the hat transform.}
Mumford's $\IQ$-valued intersection product can be extended to demi-normal surfaces as long as the curves do not have common components with the conductor, that is, are almost Cartier divisors in the sense of \cite{Hartshorne94}.

\begin{defin}\label{defin: intersection}
 Let $X$ be a demi-normal surface and let  $A$, $B$ be $\IQ$-Weil-divisors on $X$ whose support does not contain any irreducible component of the conductor $D$.  Let $\bar A$ and  $\bar B$ be the strict transforms of $A$ and $B$ on the normalisation $\bar X$. Then we define an intersection pairing
 \[
  AB:= \bar A \bar B
 \]
where the right hand side is Mumford's intersection pairing.
\end{defin}
One should consider the resulting intersection numbers with care if both $A$ and $B$ are not Cartier. For example, the intersection number of curves on different irreducible components of $X$ is always zero in this definition even if the geometric intersection is non-empty.

We also need to recall the hat-transform of a curve in an slc surface constructed in \cite[Appendix A]{liu-rollenske12}.

\begin{prop}\label{prop: hat transform}
Let  $X$ be a demi-normal surface and $B\subset X$ be a curve which does not contain any irreducible component of the conductor $D$.  Let $f\colon Y\rightarrow X$ be the minimal semi-resolution.
\begin{enumerate}
 \item There exists a unique  Weil divisor $\hat B_Y$  on $Y$  such that $f_*\hat B_Y= B$,   for all exceptional divisors $E$ of $f$ we have $\hat B_Y E\leq 0$ and $\hat B_Y$ is minimal with this property.
\item If $X$ has slc singularities then 
\[2p_a(B)-2\leq2 p_a(\hat B_{\bar Y})-2 + {\hat B_{\bar Y} D_{\bar Y}}\]
where $\hat B_{\bar Y}$ is the strict transform of $\hat B_{Y}$ in the normalisation. 
\end{enumerate}
\end{prop}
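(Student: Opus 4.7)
\emph{Plan.} The idea is to work primarily on the normalisation $\bar Y$ of $Y$, which is smooth since $Y$ is semi-smooth, and to exploit the classical fact that for each singular point of $\bar X$ the intersection matrix of the exceptional divisors of $\bar f\colon \bar Y\to \bar X$ is negative definite (Mumford) and has non-negative off-diagonal entries (distinct exceptional components meet transversally).

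For part~(1), I would start with the strict transform $\tilde B_{\bar Y}$ and iteratively raise the coefficient of an exceptional component $E_j$ by one whenever the current divisor still has strictly positive intersection with $E_j$. The process terminates by negative definiteness of the intersection form, since the integer coefficients cannot exceed $\lceil \mu_i\rceil$, where $\mu_i$ are the non-negative rational coefficients of the Mumford pull-back $g^*B$ on the exceptional divisors ($g=\pi\circ\bar f$). Uniqueness of the minimal integer solution follows from the standard observation that the componentwise minimum of two admissible divisors is again admissible: if $\hat B$ and $\hat B'$ both satisfy the inequality and $\hat B''$ is their componentwise minimum, then at any index $j$ where $\hat B''$ agrees with $\hat B$ one has
\[
\hat B''\cdot E_j = \hat B\cdot E_j + \sum_{i\neq j}(\hat B''-\hat B)_i\,E_i\cdot E_j \leq 0,
\]
since each summand is non-positive. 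This construction then descends from $\bar Y$ to $Y$ via the pushout diagram~\eqref{diagr: pushout}.

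For part~(2), the strategy is to combine adjunction on the smooth surface $\bar Y$,
\[
2p_a(\hat B_{\bar Y})-2 = (K_{\bar Y}+\hat B_{\bar Y})\cdot \hat B_{\bar Y},
\]
with adjunction on $X$ written as $2p_a(B)-2=(K_X+B)\cdot B$ (using the different when $K_X+B$ is only $\IQ$-Cartier). The projection formula together with $g_*\hat B_{\bar Y}=B$ converts the right-hand side of the latter into $g^*(K_X+B)\cdot \hat B_{\bar Y}$. Substituting the slc pull-back formula $g^*K_X = K_{\bar Y}+D^s_{\bar Y}-A$, where $D^s_{\bar Y}=\bar f^{-1}_*\bar D$, $A=\sum a_iE_i$, and $a_i\geq -1$ are the log discrepancies of the lc pair $(\bar X,\bar D)$, and writing $g^*B=\hat B_{\bar Y}-\sum \gamma_iE_i$ with $\gamma_i\geq 0$ since the hat transform majorises the Mumford pull-back, the inequality to be proved becomes
\[
\sum_i (\varepsilon_i+a_i+\gamma_i)\, E_i\cdot \hat B_{\bar Y} \geq 0,
\]
where $\varepsilon_i=1$ if $E_i\subset D_{\bar Y}$ and $0$ otherwise. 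Every factor $E_i\cdot \hat B_{\bar Y}$ is non-positive by part~(1), so the claim reduces to a sign check for the coefficient $\varepsilon_i+a_i+\gamma_i$ on each exceptional component.

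The hard part will be precisely this sign check: the three quantities $\varepsilon_i$, $a_i$, and $\gamma_i$ originate from unrelated data---the conductor structure of $X$, the log discrepancies of its slc singularities, and the rounding involved in the hat transform---and they have to balance correctly on every type of singularity. I expect that the non-normal slc singularities (degenerate cusps), where $D_{\bar Y}$ contains exceptional components of $\bar f$ and the lc pull-back formula has to be set up with care, will need to be treated separately from the normal lc cases (canonical, simple elliptic, and cusp) listed in Remark~\ref{rem: classification of sings}.
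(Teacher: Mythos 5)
First, note that the paper does not prove this proposition at all: it is quoted verbatim from \cite[Appendix A]{liu-rollenske12}, so there is no in-text proof to compare against. Judged on its own merits, your part~(1) is essentially the standard Zariski--Laufer argument and is sound in structure: the componentwise-minimum argument for uniqueness is correct, and the raising algorithm terminates. Two points need repair, though. The termination bound $\lceil\mu_i\rceil$ is not justified --- $\lceil g^*B\rceil$ need not satisfy $\lceil g^*B\rceil\cdot E_j\le 0$ when some $\mu_i$ are integral and neighbouring ones are not; the clean fix is to produce \emph{some} admissible divisor from negative definiteness (a large multiple of an effective exceptional divisor that is negative on every $E_j$) and show the algorithm stays below it. More seriously, the descent from $\bar Y$ to $Y$ is not automatic: over a degenerate cusp the $f$-exceptional curves lie inside the non-normal locus $D_Y$, so you must say what $\hat B_Y\cdot E$ means there and check that your construction on $\bar Y$ is compatible with the gluing; the statement is genuinely about a Weil divisor on the non-normal surface $Y$.

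Part~(2) has a real gap. Your starting identity $2p_a(B)-2=(K_X+B)\cdot B$ is false for Weil divisors on a singular surface with the Mumford/strict-transform intersection product of Definition~\ref{defin: intersection}: already for a ruling $L$ of the quadric cone one has $2p_a(L)-2=-2$ while $(K_X+L)\cdot L=-3/2$. The discrepancy between the two sides (coming from the $\delta$-invariant of $B$ at singular points of $X$ and the different) is precisely what the proposition quantifies, so assuming the equality begs the question; you would at least have to prove an inequality in the correct direction, which is nontrivial. On top of that, the reduction you arrive at --- $\varepsilon_i+a_i+\gamma_i\le 0$ for every exceptional component --- is not plausible termwise: for an exceptional curve of a degenerate cusp contained in $D_{\bar Y}$ you have $\varepsilon_i=1$ and the log-discrepancy contribution is $-1$ at best, while $\gamma_i\ge 0$ can be strictly positive, so the sign you need can fail. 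You correctly flag this as ``the hard part,'' but since it is the entire content of the statement, the proposal does not constitute a proof. The argument in \cite[Appendix A]{liu-rollenske12} avoids any adjunction on $X$ itself: it compares $\chi(\ko_B)$ with $\chi(\ko_{\hat B_Y})$ using $Rf_*$ and the connectedness of the exceptional part of $\hat B_Y$ forced by minimality, and only then applies adjunction on the smooth surface $\bar Y$ together with the conductor sequence, which is where the term $\hat B_{\bar Y}D_{\bar Y}$ enters. I would redo part~(2) along those lines rather than trying to salvage the discrepancy bookkeeping.
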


\section{Riemann--Roch and the $P_2$-inequality}

\begin{theo}[Riemann--Roch for Cartier divisors]\label{thm: RR}
Let $X$ be a demi-normal surface and $L$ be a Cartier divisor on $X$. Then 
\[ \chi(\ko_X(L))=\chi(\ko_X)+\frac1 2 L(L-K_X).\]
\end{theo}
\begin{proof}
 Let $\pi\colon \bar X \to X$ be the normalisation and $\bar D \subset \bar X$ the conductor. 
 We tensor the structure sequence of the double locus,
\begin{equation}\label{eq: sequence of double locus}
 0\to \ki_D\to \ko_X \to \ko_D\to 0,
\end{equation}
with $\ko_X(L)$ and get 
\begin{equation}\label{eq: chi1}
\chi(\ko_X(L)) = \chi( \ki_D\tensor \ko_X(L))  )+\chi(\ko_X(L)\restr D).
\end{equation}
Pulling \eqref{eq: sequence of double locus} back to $\bar X$ and using   projection formula $\pi_*\pi^*\ko_X(L)(-\bar D) = \ki_D\tensor \ko_X(L)$  we have
\[
 \chi(\ki_D\tensor \ko_X(L))=\chi(\pi^*\ko_X(L)) -\chi(\pi^*\ko_X(L)\restr{\bar D})
\]
Adding this to \eqref{eq: chi1} and applying the Riemann--Roch formula for Cartier divisors on normal surfaces (\cite{bla95a}), Riemann--Roch on $D$ and $\bar D$, and Proposition \ref{prop: invariants} we get
\begin{align*}
 \chi(\ko_X(L))&= \chi(\pi^*\ko_X(L))  -\chi(\pi^*\ko_X(L)\restr{\bar D})+ \chi(\ko_X(L)\restr D)\\
& =\chi(\ko_{\bar X}) + \frac{1}{2} \pi^*L(\pi^*L-K_{\bar X})  -\chi(\pi^*\ko_X(L)\restr{\bar D})+ \chi(\ko_X(L)\restr D)\\
&=\chi(\ko_{\bar X}) + \frac{1}{2} \pi^*L(\pi^*L-K_{\bar X} -\bar D +\bar D) \\
&\hspace{4cm}-(\chi(\ko_{\bar D})+\pi^*L\bar D)+ (\chi(\ko_D)+\deg L\restr D) \\
& =\chi(\ko_{\bar X})+\chi(\ko_D)-\chi(\ko_{\bar D})+ \frac{1}{2} L(L-K_X) \\
&=\chi(\ko_X)+\frac1 2 L(L-K_X).\hspace{1cm} \text{(by Proposition~\ref{prop: invariants})}
\end{align*}
This is the claimed formula.
\end{proof}

\begin{cor}\label{cor: plurigenus}
 Let $(X, \Delta)$ be a Gorenstein stable log surface. Then for $m\geq 2$ 
\[
P_m(X, \Delta) = h^0 ( \omega_X(\Delta)^{\tensor m}) =\chi(\ko_X) + \frac{m(m-1)}{2}(K_X+\Delta)^2+ \frac{m}{2}(K_X+\Delta)\Delta
\]
\end{cor}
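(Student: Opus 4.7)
The plan is to combine the Riemann--Roch formula of Theorem~\ref{thm: RR} with a vanishing theorem, in order to pass from $\chi$ to $h^0$. Since $(X,\Delta)$ is Gorenstein, the divisor $L:=m(K_X+\Delta)$ is Cartier, so Theorem~\ref{thm: RR} applies directly. Using the decomposition $L-K_X=(m-1)(K_X+\Delta)+\Delta$ and expanding the intersection product, one computes
\[
L(L-K_X)=m(m-1)(K_X+\Delta)^2+m(K_X+\Delta)\Delta,
\]
so Riemann--Roch immediately delivers the right-hand side of the asserted formula as $\chi(\ko_X(L))$. Note that the intersection number $(K_X+\Delta)\Delta$ is well-defined in the sense of Definition~\ref{defin: intersection}, because $K_X+\Delta$ is Cartier.

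It remains to upgrade this equality of Euler characteristics to one about $h^0$, which amounts to showing $h^i(X,\omega_X(\Delta)^{\otimes m})=0$ for $i=1,2$. Writing $\ko_X(L)=\omega_X\otimes M$ with $M:=\ko_X((m-1)(K_X+\Delta)+\Delta)$, the bundle $M$ is Cartier and ample: for $m\geq 2$ the summand $(m-1)(K_X+\Delta)$ is ample by stability, and $\Delta$ is effective. Kodaira-type vanishing for slc surfaces, as developed in \cite{KollarSMMP}, then yields $H^i(X,\omega_X\otimes M)=0$ for all $i>0$, finishing the argument.

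The only step that is not a routine calculation is invoking the appropriate vanishing theorem. However, since $M$ is an honest ample Cartier divisor (rather than merely $\IQ$-Cartier) and the ambient surface is Gorenstein slc, this falls squarely within the standard slc vanishing already used elsewhere in the paper, so no real obstacle is expected.
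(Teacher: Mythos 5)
Your overall route is the same as the paper's: apply Theorem~\ref{thm: RR} to $L=m(K_X+\Delta)$, expand $L(L-K_X)$ via $L-K_X=(m-1)(K_X+\Delta)+\Delta$ (this computation is correct and gives exactly the stated right-hand side), and then kill higher cohomology by a vanishing theorem. The paper does precisely this, quoting \cite[Corollary~3.4]{liu-rollenske12} for the vanishing.

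However, your justification of the vanishing step has a genuine flaw. You write $\ko_X(L)=\omega_X\otimes M$ with $M=\ko_X((m-1)(K_X+\Delta)+\Delta)$ and assert that $M$ is ample because it is a sum of an ample divisor and an effective one. That implication is false: an ample plus an effective divisor need not be ample, or even nef, since the sum can have negative degree on a component $C$ of $\Delta$ with sufficiently negative self-intersection (e.g.\ $\Delta=C$ irreducible with $(K_X+C)C=2p_a(C)-2>0$ but $C^2\ll 0$ gives $((m-1)(K_X+C)+C)\cdot C<0$ for $m=2$). So you cannot invoke Kodaira-type vanishing for $\omega_X\otimes(\text{ample})$ in this form. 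The correct move is not to absorb $\Delta$ into the positive part but to keep it as boundary: write $\ko_X(L)=\omega_X(\Delta)\otimes N$ with $N=(m-1)(K_X+\Delta)$, which \emph{is} ample Cartier for $m\geq 2$, and invoke the log version of vanishing for slc pairs, $H^i(X,\omega_X(\Delta)\otimes N)=0$ for $i>0$ and $N$ ample --- this is exactly the content of \cite[Corollary~3.4]{liu-rollenske12}. With that substitution your argument matches the paper's proof.
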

\begin{proof} 
 Apply Theorem~\ref{thm: RR} to $L=m(K_X+\Delta)$ and use that higher cohomology vanishes by \cite[Corollary~3.4]{liu-rollenske12}. 
\end{proof}

\begin{theo}[$P_2$-inequality]\label{thm: P_2}
Let $(X,\Delta)$ be a Gorenstein stable log surface. Then 
\[ \chi(\omega_X(\Delta))=\chi(\ko_X(-\Delta))\geq -(K_X+\Delta)^2,\]
and equality holds if and only if $\Delta=0$ and $P_2(X)= h^0(X, \omega_X^{\tensor 2})=0$.
\end{theo}
\begin{proof}
If $\Delta = 0$ then  by Corollary \ref{cor: plurigenus} we have
\[0\leq P_2(X) = \chi(\ko_X) + K_X^2,  \]
which gives the claimed formula.

Now suppose $\Delta\neq 0$. We must prove $\chi(\omega_X(\Delta)) + (K_X+\Delta)^2>0$. We begin by applying Theorem~\ref{thm: RR} to $\omega_X(\Delta)$ and $\omega_X(\Delta)^{\tensor 2}$ and taking the difference of the resulting formulas, which gives 
\begin{equation}\label{eq: rr}
\chi(\omega_X(\Delta)) + (K_X+\Delta)^2 = \chi(\omega_X(\Delta)^{\tensor 2})-\frac 1 2 (K_X+\Delta)\Delta.
 \end{equation}

To calculate the right hand side we use the exact sequence 
 \[0\rightarrow \ko_X(2K_X+\Delta) \rightarrow \ko_X(2K_X+2\Delta) \rightarrow \ko_\Delta(2K_X+2\Delta) \rightarrow 0.\]
Together with Riemann--Roch on $\Delta$ we obtain
\[ \chi(\omega_X(\Delta)^{\tensor 2}) = \chi(\ko_X(2K_X+\Delta))+\chi(\ko_\Delta)+2(K_X+\Delta)\Delta,\]
where we used that for a Cartier divisor the degree on a curve coincides with the intersection product. Thus \eqref{eq: rr} becomes
\[\chi(\omega_X(\Delta)) + (K_X+\Delta)^2 = (K_X+\Delta)\Delta+ \chi(\ko_X(2K_X+\Delta))+\left(\chi(\ko_\Delta) +\frac 1 2 (K_X+\Delta)\Delta\right).\]
By \cite[Corollary~3.4]{liu-rollenske12} we have  $H^i(X,2K_X+\Delta) =0$ for $i>0$  and hence 
$  \chi(X,2K_X+\Delta)=h^0(X,2K_X+\Delta)\geq 0.$ Since $K_X+\Delta$ is ample and we assumed $\Delta\neq 0$ the first summand is strictly positive.  It remains to control the last summand. This is accomplished by the following estimate, which seems rather trivial, recall however that we are working on a non-normal surface.

\begin{custom}[Claim] $(K_X+\Delta)\Delta+2\chi(\ko_\Delta)\geq 0$.\end{custom}

\noindent\emph{Proof of the Claim.} Let $f\colon Y\rightarrow X$ be the minimal semi-resolution and $\pi\colon \bar Y\rightarrow Y$ the normalisation. Let $Z$ be the exceptional divisor of $f$ and $\bar Z$ its strict transform in $\bar Y$. Let further $\hat \Delta_{\bar Y}\subset \bar Y$ be the strict transform of the hat transform of $\Delta$ (see Proposition \ref{prop: hat transform})
which, by adjunction on $\bar Y$ satisfies
\begin{equation}\label{eq: boundness of genus of boundary}
 -2\chi(\ko_\Delta) \leq (K_{\bar Y} + D_{\bar Y} + \hat\Delta_{\bar Y})\hat\Delta_{\bar Y}.
\end{equation}

Going through the cases in the classification of Gorenstein slc singularities with non-empty boundary (Remark \ref{rem: classification of sings})  one can compute $\hat \Delta_{\bar Y}$ explicitly. In fact, 
$\hat\Delta_{\bar Y}= \Delta_{\bar Y} + \bar Z'$, where $\bar Z'$ consists of the  reduced connected components of $\bar Z$ that intersect $\Delta_{\bar Y}$.\footnote{To get this simple description it is important to work with the minimal semi-resolution instead of the minimal log semi-resolution. If we  blow up a node of $\Delta$ which is a smooth point of $X$ then the resulting $(-1)$-curve occurs with multiplicity 2 in the hat transform.}
So 
\begin{align*}
 (K_{\bar Y} + D_{\bar Y} + \hat\Delta_{\bar Y})\hat\Delta_{\bar Y}
 &= (K_{\bar Y} + D_{\bar Y} + \Delta_{\bar Y} + \bar Z')\hat\Delta_{\bar Y}\\
 &= (K_{\bar Y} + D_{\bar Y} + \Delta_{\bar Y} + \bar Z)\hat\Delta_{\bar Y} \\
    &= \pi^*f^*(K_X+\Delta)\hat\Delta_{\bar Y}\\
    &= (K_X+\Delta)\Delta.
\end{align*}
Combining this  with \eqref{eq: boundness of genus of boundary} finishes the proof of the claim.
\end{proof}
\begin{rem}
Assume $X$ is a Gorenstein stable surface with $P_2(X)=0$. By the vanishing results in \cite[Sect.\ 2]{liu-rollenske12} we have a surjection $H^0(2K_X)\onto H^0(2K_X\restr D)$, so the latter space has to be zero as well. Thus the degree of $2K_X\restr C$ has to be small, more precisely, by  \cite[Lem.~4.7,~Lem.~4.8]{liu-rollenske12} the non-normal locus cannot be nodal and
\[p_a(D)-1\geq \deg 2K_X\restr{D}\geq 4p_a(D) -4 +2\sum_{p\in\sing D} (2- \mu_p(D)),\]
where $\mu_p(D)$ is the multiplicity of $D$ at $p$. This imposes strong restrictions on the geometry of $D$. 

In fact, the classification results obtained in \cite{fpr13} show  that there is no Gorenstein stable surface with $K_X^2=1$ and $P_2(X)=0$. So we suspect that the $P_2$-inequality might not be sharp. 
\end{rem}

\section{Noether inequality}
In this section we prove the analogue of Noether's inequality for Gorenstein stable log surfaces.

\begin{thm}[Stable log Noether inequality]\label{thm: log noether}
 Let $(X,\Delta)$ be a connected Gorenstein stable log surface (with reduced boundary $\Delta$). Then
  \[p_g(X, \Delta) = h^0(X, \omega_X(\Delta)) \leq (K_X+\Delta)^2 +2.\]
and the inequality is strict if $\Delta=0$.
\end{thm}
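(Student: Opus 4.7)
The overall strategy is to reduce to the classical smooth Noether inequality via the normalisation, then carefully book-keep the codimension imposed by the gluing to recover the constant $+2$.

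First I would pass to the normalisation $\pi\colon \bar X \to X$ and decompose $\bar X = \bigsqcup_{i=1}^k \bar X_i$ into connected components, setting $B_i = (\bar D + \bar \Delta)|_{\bar X_i}$. By Theorem~\ref{thm: triple}, each $(\bar X_i, B_i)$ is a connected log-canonical pair with $K_{\bar X_i} + B_i$ ample Cartier, and Proposition~\ref{prop: descend section} identifies $H^0(X, \omega_X(\Delta))$, via $\pi^*$, with the subspace of $\bigoplus_i H^0(\bar X_i, \omega_{\bar X_i}(B_i))$ cut out by $\tau$-anti-invariance of the residues along $\bar D^\nu$. I would bound each summand by a normal log Noether inequality
\[
h^0(\bar X_i, \omega_{\bar X_i}(B_i)) \leq (K_{\bar X_i} + B_i)^2 + 2,
\]
which I prove by passing to a minimal log resolution $f_i\colon W_i \to \bar X_i$: the log-canonical condition gives $f_i^*(K_{\bar X_i} + B_i) \leq K_{W_i} + \tilde B_i + E_i$ with $\tilde B_i$ the strict transform of $B_i$ and $E_i$ the reduced exceptional divisor, and the negativity of contraction yields $(K_{W_i} + \tilde B_i + E_i)^2 \leq (K_{\bar X_i} + B_i)^2$. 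On the smooth SNC pair, the adjunction sequence
\[
0 \to \omega_{W_i} \to \omega_{W_i}(\tilde B_i + E_i) \to \omega_{\tilde B_i + E_i} \to 0
\]
reduces the question to the classical Noether inequality for a minimal model of $W_i$ (controlling $p_g(W_i)$) together with adjunction bounds on $h^0(\omega_{\tilde B_i + E_i})$ on each component of the boundary.

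Summing over the $\bar X_i$ and using Proposition~\ref{prop: invariants} yields only the weaker bound $h^0(X, \omega_X(\Delta)) \leq (K_X + \Delta)^2 + 2k$. To save the extra $2(k-1)$, I would exploit the residue constraint combinatorially. Connectedness of $X$ forces the graph whose vertices are the $\bar X_i$ and whose edges record gluings along components of $\bar D^\nu$ between distinct $\bar X_i$'s to be connected, so it admits a spanning tree with $k-1$ edges. For each such edge, joining $\bar X_i$ to $\bar X_j$ along a curve $C \subset \bar D$, the ampleness of $K_X + \Delta$ ensures that the residue maps from both $H^0(\bar X_i, \omega_{\bar X_i}(B_i))$ and $H^0(\bar X_j, \omega_{\bar X_j}(B_j))$ into the space of residues on $C$ are non-zero, and the $\tau$-anti-invariance condition then imposes at least two independent linear conditions on the direct sum. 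Summed over the $k-1$ spanning-tree edges this removes $2(k-1)$ dimensions and gives the required bound. The strict inequality for $\Delta = 0$ follows from the factor $\tfrac{1}{2}$ in classical Noether: the minimal resolution of each $\bar X_i$ is minimal of general type with self-intersection at most $(K_{\bar X_i})^2$, so already in the normal case $p_g \leq \tfrac{1}{2}K_X^2 + 2 < K_X^2 + 2$.

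The main obstacle will be to guarantee that every spanning-tree edge really contributes the full $2$ to the savings, even in degenerate situations such as a rational gluing curve $C$ with $\deg((K_X+\Delta)|_C) = 1$, for which the target space of residues has dimension only $1$ and the naive anti-invariance cuts only one dimension. Such deficits must then be compensated by slack elsewhere---either from the strict form of classical Noether on one of the $W_i$, or from additional constraints imposed by other gluings (including self-gluings inside a single component, which do not appear in the spanning tree but still contribute residue constraints). Making this combinatorial bookkeeping precise is presumably the heart of the argument, and corresponds to the ``control the combinatorics of the glueing process carefully'' promised in the introduction.
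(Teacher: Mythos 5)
Your overall strategy matches the paper's in outline: bound each component of the normalisation by the normal log Noether inequality (Proposition~\ref{prop: normal inequality}) and recover the lost $2(k-1)$ from the residue constraints along the gluing curves. But the step you defer --- ``making this combinatorial bookkeeping precise'' --- is not a technicality to be filled in later; it is the entire content of the proof, and the mechanism you propose does not work as stated. An edge joining $\bar X_i$ to $\bar X_j$ along $C$ cuts the direct sum by $\dim(\im r_i+\im r_j)$, which need not be $\geq 2$, and counting ``$2$ per spanning-tree edge'' is the wrong accounting in any case: the quantity that must be absorbed is the \emph{excess} $p_g-(K+B)^2\in\{1,2\}$ of each individual component, not a fixed contribution per edge. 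The paper's key input (Lemmata~\ref{lem: log K^2+2} and~\ref{lem: log K^2+1}) is precisely that this excess is bounded above by the rank of the residue map to \emph{any} boundary component on which the log-canonical divisor is positive; this is proved via the explicit classification of $p_g$-extremal pairs (Proposition~\ref{prop: normal equality}) and, in the excess-one case, an analysis of the fixed part of $|K_W+\Lambda|$ following Sakai. With that in hand the paper orders the components so that the partial unions $U^i$ are connected in codimension one and inducts, attaching one component at a time through the fibre-product description of Remark~\ref{rem: pushout sections}: at each step $h^0(Y_i,\cdot)-\dim\im(r_{Y_i})\leq(K_{Y_i}+\cdot)^2$, so the new component adds at most its own $(K+\cdot)^2$. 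Without a lemma of this form your deficits cannot be ``compensated by slack elsewhere'' in any controlled way. Note also that the paper works on the minimal semi-resolution, introducing the divisor $Z$ with $\omega_Y(\Delta_Y+Z)=f^*\omega_X(\Delta)$, so that the component pairs are smooth with nc boundary and the linear-system arguments apply.

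Your argument for strictness when $\Delta=0$ is also incorrect. If $X$ is non-normal the pairs $(\bar X_i,\bar D_i)$ have non-empty boundary, and even when $X$ is normal its minimal resolution need not be a minimal surface of general type: Gorenstein lc singularities include simple elliptic and cusp points, for which $f^*K_X=K_W+E$ with $E$ a non-zero reduced exceptional divisor, so the classical inequality $p_g\leq\tfrac12K^2+2$ is simply not available for the relevant pairs. The paper instead derives strictness from the equality analysis: equality forces every component of the normalisation to be one of the $p_g$-extremal pairs of Proposition~\ref{prop: normal equality}, all of which carry non-trivial boundary, and the induction (properties $(E_1)$--$(E_4)$) shows this boundary cannot be exhausted by the conductor, whence $\Delta\neq0$ (Corollary~\ref{cor: nonnormal equality}).
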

\begin{rem}\label{rem: noether sharp}
 The inequality is sharp for pairs, see for example the list of normal log surfaces in Proposition~\ref{prop: normal equality}. We will give a partial characterisation for log surfaces on the stable log Noether line in Corollary \ref{cor: nonnormal equality}.

 For surfaces without boundary the strict inequality is also sharp: there are smooth Horikawa surfaces with $K_X^2=1$ and $p_g(X)=2$.
However, we believe that there are no Gorenstein stable surfaces $X$ such that $p_g(X)-1=K_X^2\geq3$ holds.

If we drop the Gorenstein condition there are normal stable surfaces $X$ such that $p_g(X) > K_X^2 + 2$
\cite[Example~1.8]{TZ92}. The weaker inequality $p_g(X) \leq \ulcorner K_X^2\urcorner + 2$ might be a working hypothesis for the general case.
\end{rem}

\begin{cor}\label{cor: log noether}
 Let $(X,\Delta)$ be a connected Gorenstein stable log surface. Then 
\[ \chi(X,\omega_X(\Delta)) \leq (K_X+\Delta)^2 +2.\]
\end{cor}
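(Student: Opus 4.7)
The plan is to deduce the corollary from Theorem \ref{thm: log noether} by controlling the auxiliary cohomology groups in the Euler characteristic. Writing
\[
\chi(X, \omega_X(\Delta)) = h^0(\omega_X(\Delta)) - h^1(\omega_X(\Delta)) + h^2(\omega_X(\Delta)),
\]
and using Serre duality on the Gorenstein surface $X$ (the same duality invoked after the definition of $p_g(X,\Delta)$) to identify $h^2(\omega_X(\Delta))$ with $h^0(\ko_X(-\Delta))$, we obtain
\[
\chi(X, \omega_X(\Delta)) = p_g(X,\Delta) - q(X,\Delta) + h^0(\ko_X(-\Delta)) \leq p_g(X,\Delta) + h^0(\ko_X(-\Delta)).
\]
So it is enough to bound the right-hand side by $(K_X+\Delta)^2+2$.

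Next I would split on whether the boundary is empty. If $\Delta \neq 0$, then $h^0(\ko_X(-\Delta))=0$: any section is a global function on the connected reduced projective scheme $X$ (so a constant) vanishing on the non-empty effective divisor $\Delta$, hence zero. Theorem \ref{thm: log noether} applied to $(X,\Delta)$ then yields $\chi(\omega_X(\Delta)) \leq p_g(X,\Delta) \leq (K_X+\Delta)^2+2$ at once.

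If $\Delta=0$, then $h^0(\ko_X)=1$ by connectedness, so $\chi(\omega_X) \leq p_g(X)+1$. Here I would invoke the \emph{strict} form of Theorem \ref{thm: log noether} for empty boundary, which gives $p_g(X) \leq K_X^2+1$, and conclude $\chi(\omega_X) \leq K_X^2+2$. There is no genuine obstacle beyond correctly packaging Serre duality and remembering that the theorem's strict inequality is precisely what one needs to absorb the extra $+1$ coming from $h^0(\ko_X)$ in the boundary-free case.
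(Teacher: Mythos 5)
Your argument is correct and matches the paper's own proof essentially verbatim: both decompose $\chi(X,\omega_X(\Delta))$ into its cohomology summands, identify $h^2(X,\omega_X(\Delta))$ with $h^0(X,\ko_X(-\Delta))$ by duality, discard $-h^1\leq 0$, and split into the cases $\Delta\neq 0$ (where $h^0(\ko_X(-\Delta))=0$) and $\Delta=0$ (where the strict form of Theorem~\ref{thm: log noether} absorbs the extra $1$ from $h^0(\ko_X)$). No differences worth noting.
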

\begin{proof}
Note that $\chi(X,\omega_X(\Delta))= h^0(X, \omega_X(\Delta)) - h^1(X, \omega_X(\Delta))+h^2(X, \omega_X(\Delta))$. We have $h^2(X, \omega_X(\Delta))=h^0(X,\ko_X(-\Delta))$ which vanishes if $\Delta \neq0$ and is 1-dimensional if $\Delta =0$. The corollary follows from this and Theorem~\ref{thm: log noether}.
\end{proof}

\subsection{Set-up for the proof}\label{sect: setup noether}
 Consider the minimal semi-resolution $(Y, \Delta_Y)$ of $(X, \Delta)$ and
 the respective normalisations:
\[
\begin{tikzcd}
 D_{\bar Y} \cup\Delta_{\bar Y}\rar[hookrightarrow] \dar& \bar Y \rar{\bar f}\dar{\eta} & \bar X \dar{\pi}\\
D_{ Y} \cup \Delta_Y\rar[hookrightarrow] & Y \rar{f} & X.
\end{tikzcd}
\]
where $\Delta_Y$ (resp.~$\Delta_{\bar Y}$) is the strict transform of $\Delta_X$ in $Y$ (resp.~$\bar Y$).
Our approach is to compute as much as possible on the disjoint union of smooth surfaces $\bar Y$ which we decompose into irreducible components as $\bar Y = \bigsqcup_{i=1}^k\bar Y_i$. Irreducible components of the other spaces involved will be numbered correspondingly, that is, $\bar X_i=\bar f(Y_i)$ and for a divisor $\bar E$ the part contained in $\bar Y_i$ will be denoted by $\bar E_i$.

The non-normal locus of $Y$ is a smooth curve $D_Y$ and the double cover $\eta_D\colon D_{\bar Y}\to D_Y$ corresponds to a line-bundle $\kl$ on $D_Y$ such that $\kl^{\tensor 2}$ is the line bundle associated to the branch points of $\eta_D$. 

A Cartier  divisor $Z$ is defined via $\omega_Y(\Delta_Y + Z) = f^*\omega_X(\Delta)$; it consists of a curve of arithmetic genus 1 for each simple elliptic singularity, cusp or degenerate cusp of $X$ and a chain of rational curves over each the intersection point of $\Delta$ and $D$. The strict transform of $Z$ in the normalisation $\bar Y$ will be denoted by $\bar Z$. With these notations one can check
\begin{gather*} 
\eta^*(K_Y+\Delta_Y+Z) = K_{\bar Y}+ D_{\bar Y}+\Delta_{\bar Y} + \bar Z, \\ \eta^*(K_Y+\Delta_Y + Z)\restr{ D_{\bar Y} }= K_{D_{\bar Y}}+(\Delta_{\bar Y}+\bar Z)\restr{D_{\bar Y}},
\end{gather*}
and
\begin{align}\label{eq: log K^2} 
(K_X+\Delta)^2 &= \eta^*f^*(K_X+\Delta)^2\notag \\
               &= (K_{\bar Y}+D_{\bar Y}+\Delta_{\bar Y}+\bar Z)^2\\
               &= \sum_i (K_{\bar Y_i}+D_{\bar Y, \, i}+\Delta_{\bar Y,\,i} + \bar Z_i)^2.\notag
 \end{align}
To relate $h^0(X,\omega_X(\Delta))$ to the geometry of $\bar Y$ we first note that 
\[h^0(X,\omega_X(\Delta)) =h^0(Y, f^*\omega_X(\Delta))=h^0(Y, \omega_Y(\Delta_Y+Z)) \] by the projection formula.
The exact sequence 
\[
  0\to \ko_{Y}\to \eta_*\ko_{\bar Y} \to \inverse \kl\to 0
\]
tensored with $\omega_Y(\Delta_Y+Z)$ gives the vertical exact sequence:
\[
\begin{tikzcd}
 0 \dar\\
 H^0(X, \omega_X(\Delta)) \dar{(f\circ\eta)^*} \\
 H^0(\bar Y, \omega_{\bar Y}(D_{\bar Y}+\Delta_{\bar Y} + \bar Z))\dar{\rho}\rar{\mathrm{Res}} &  H^0(D_{\bar Y}, \omega_{D_{\bar Y}}(\Delta_{\bar Y} + \bar Z))\dar\\
 H^0(D_Y, \kl^{-1}\otimes\omega_Y(\Delta_Y+Z)\restr{D_Y})
 \rar[equal]  & H^0(D_Y, \kl^{-1}\otimes\omega_Y(\Delta_Y+Z)\restr{D_Y})
\end{tikzcd}
\]

and the map $\rho$ factors through the residue map to the conductor divisor. Thus we have
\begin{equation}\label{eq: log h^0}
H^0(X, \omega_X(\Delta))
= \sum_i h^0(\bar Y_i, \omega_{\bar Y_i}(D_{\bar Y, i}+ \Delta_{\bar Y, i}+ \bar Z_i))- \dim \im \rho.
\end{equation}
In fact,  $\im\rho$ is the quotient of $ H^0(D_{\bar Y}, \omega_{D_{\bar Y}}(\Delta_{\bar Y} + \bar Z))$ by the  $\tau$-invariant subspace by Proposition~\ref{prop: descend section}.

Looking at the equations \eqref{eq: log K^2} and \eqref{eq: log h^0} our claim is trivial unless there is a component $Y_i$ such that  $h^0(\bar Y_i, \omega_{\bar Y_i}(D_{\bar Y, i}+\Delta_{\bar Y,i}+\bar Z_i))$ is bigger than $(K_{\bar Y_i}+D_{\bar Y,  i}+\Delta_{\bar Y,i}+ \bar Z_i)^2$. If there are such components we have to make sure that the image of $\rho$ is big enough, that is, enough sections do not descend from the normalisation to $Y$. This will be done by studying the residue map to single components of $D_{\bar Y}$.

Before adressing these questions in the next subsection we state a Lemma that picks out those components of the boundary is $D_{\bar Y}+\Delta_{\bar Y}+\bar Z $ on the normalisation that (possibly) are contained in the conductor. 
\begin{lem}
Let $\bar C$ be an irreducible component of $D_{\bar Y}+\Delta_{\bar Y} + \bar Z$. Then $\bar C$ is a component of $D_{\bar Y}+\Delta_{\bar Y}$ if and only if $\omega_{\bar Y}(D_{\bar Y}+\bar Z)\restr {\bar C}$ is ample. The line bundle $\omega_{\bar Y}(D_{\bar Y}+\Delta_{\bar Y} + \bar Z)$ restricted on the components of $\bar Z$ is trivial.
\end{lem}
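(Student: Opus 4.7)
The plan is to reduce the whole lemma to the pullback identity
\[
\omega_{\bar Y}(D_{\bar Y}+\Delta_{\bar Y}+\bar Z)=(f\circ\eta)^*\omega_X(\Delta)
\]
recorded in the setup, and then to track which irreducible components of $D_{\bar Y}+\Delta_{\bar Y}+\bar Z$ are contracted by $f\circ\eta$. Since $K_X+\Delta$ is ample by stability, its pullback restricted to an irreducible curve $\bar C\subset\bar Y$ has strictly positive degree when $(f\circ\eta)(\bar C)$ is a curve and is trivial (not merely of degree zero) when $\bar C$ is contracted to a point. On an irreducible curve ampleness is equivalent to positivity of degree, so the first equivalence reduces to the contraction analysis and the second assertion is exactly the contracted case.

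The contraction analysis itself I would carry out component by component. The components of $D_{\bar Y}$ and of $\Delta_{\bar Y}$ are lifts to $\bar Y$ of components of the non-normal locus $D\subset X$ and of the boundary $\Delta\subset X$, both honest curves, so $f\circ\eta$ is birational onto each such image and the component in question is not contracted. In contrast, the explicit description of $Z$ recalled just before the lemma --- an arithmetic genus-one curve over every simple elliptic, cusp or degenerate cusp singularity of $X$, together with chains of rational curves over the points of $\Delta\cap D$ --- shows that every irreducible component of $Z$, and hence of $\bar Z$, is $f$-exceptional. Combined with the previous paragraph this yields both directions of the equivalence, and the triviality of the line bundle on a component of $\bar Z$ then follows because the pullback of a Cartier divisor restricts trivially to any curve contracted by the morphism.

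I foresee no real obstacle: once the setup identifications are in hand, the lemma is a clean bookkeeping exercise separating the strict transforms of genuine boundary components from the exceptional components introduced by the minimal semi-resolution, with the classification in Remark~\ref{rem: classification of sings} guaranteeing that the list of exceptional components is exhausted by the description of $Z$.
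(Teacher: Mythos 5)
The paper states this lemma without proof, and your argument --- identify $\omega_{\bar Y}(D_{\bar Y}+\Delta_{\bar Y}+\bar Z)$ with the pullback $(f\circ\eta)^*\omega_X(\Delta)$ of an ample line bundle and then sort the components according to whether $f\circ\eta$ contracts them (the $\bar Z$-components are exceptional, the components of $D_{\bar Y}$ and $\Delta_{\bar Y}$ map finitely onto curves in $X$) --- is correct and is surely the intended one, including the observation that a pullback is honestly trivial, not merely of degree zero, on a contracted curve. One caveat worth recording: as printed the first assertion concerns $\omega_{\bar Y}(D_{\bar Y}+\bar Z)\restr{\bar C}$, with $\Delta_{\bar Y}$ omitted, and taken literally this is false (for $X=\IP^2$ with $\Delta$ a nodal quartic one has $Y=X$, $Z=0$, and $\omega_{\IP^2}\restr{\Delta}$ has negative degree, yet $\Delta$ is a component of $D_{\bar Y}+\Delta_{\bar Y}$); the statement you actually prove, for the full divisor $D_{\bar Y}+\Delta_{\bar Y}+\bar Z$, is the reading consistent with the lemma's second sentence and with its later use.
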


\subsection{Normal pairs with $p_g(W, \Lambda)>(K_W+\Lambda)^2$}
In this section we study normal pairs with large $p_g$ and also lower bounds for the rank of the restriction map  to components of the boundary on which the log-canonical divisor is positive.

We first recall the following facts extracted from \cite{sakai80, TZ92}.
\begin{prop}\label{prop: normal inequality}
 Let $(W, \Lambda)$ be a Gorenstein log canonical pair  with reduced boundary such that $K_W+\Lambda$ is big and nef. Then 
\[ p_g(W,\Lambda)\leq (K_W+\Lambda)^2+2.\]
\end{prop}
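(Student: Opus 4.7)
The plan is to reduce to the smooth log setting via a log resolution and then invoke the classical logarithmic Noether inequality for smooth pairs due to Sakai \cite{sakai80} and Tsunoda--Zhang \cite{TZ92}.

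First I would take a minimal log resolution $g\colon \tilde W \to W$ of $(W, \Lambda)$. Since $(W,\Lambda)$ is Gorenstein log canonical with reduced boundary, the classification recalled in Remark~\ref{rem: classification of sings} shows that every exceptional divisor has discrepancy in $\{-1,0\}$ on the minimal log resolution. Hence one may write
\[ g^*(K_W+\Lambda) = K_{\tilde W} + \tilde\Lambda + \Gamma, \]
where $\tilde\Lambda$ is the strict transform of $\Lambda$, $\Gamma$ is an effective integral $g$-exceptional divisor, and $\tilde\Lambda+\Gamma$ is a reduced SNC divisor. Since $\omega_W(\Lambda)$ is a line bundle and $g_*\ko_{\tilde W} = \ko_W$, the projection formula gives $h^0(W, \omega_W(\Lambda)) = h^0(\tilde W, \omega_{\tilde W}(\tilde\Lambda+\Gamma))$; birationality of $g$ gives $(K_W+\Lambda)^2 = (K_{\tilde W}+\tilde\Lambda+\Gamma)^2$; and pullback preserves big and nef.

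The proposition is thus reduced to the smooth log Noether inequality: on a smooth projective surface $V$ with reduced SNC boundary $B$ and $L := K_V+B$ nef and big, $h^0(L) \leq L^2+2$. If $h^0(L) \leq 2$ the bound is immediate because $L$ is a nef and big Cartier divisor, so $L^2 \geq 1$. Otherwise one decomposes $|L|=|M|+F$ into mobile plus fixed parts, resolves the base points of $|M|$ on a further blowup, and splits cases according to whether $|M|$ is composed of a pencil or defines a morphism with two-dimensional image. In each case Riemann--Roch on a general member $C$ of the pencil (or of the mobile system), together with the sequence $0 \to \ko_V \to \ko_V(L) \to \ko_V(L)|_C \to 0$ and the nef and bigness of $L$, yields the desired bound.

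The main obstacle is this smooth log Noether step itself, whose pencil analysis is delicate; it is however the content of \cite{sakai80} and \cite{TZ92}. The log-resolution reduction on our side is routine bookkeeping, made possible by the Gorenstein lc hypothesis which ensures $\Gamma$ is effective and integral so that the cited smooth result applies verbatim.
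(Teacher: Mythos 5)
Your proposal is correct and follows essentially the same route as the paper, which offers no proof of this proposition at all but simply recalls it as a fact ``extracted from'' \cite{sakai80} and \cite{TZ92} --- precisely the reduction to the minimal log resolution plus citation of the smooth log Noether inequality that you spell out. If anything, you supply more detail than the paper does on the bookkeeping step (discrepancies in $\{-1,0\}$, projection formula, invariance of $(K_W+\Lambda)^2$).
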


\begin{prop}\label{prop: normal equality}
 Let $(W, \Lambda)$ be a Gorenstein log canonical pair such that with reduced boundary such that $K_W+\Lambda$ is ample. Let $f\colon \tilde W\to W$ be the minimal resolution and $\tilde \Lambda = f^*\Lambda$.
 If  $ p_g(W,\Lambda)=(K_W+\Lambda)^2+2$   then the pair $(\tilde W, \tilde \Lambda)$ is one of the following:
\begin{enumerate}
 \item $(\IP^2, \text{ nodal quartic curve})$,
\item $(\IP^2, \text{ nodal quintic curve})$,
\item $\tilde W = \IF_e$, $\tilde \Lambda$ a nodal curve in $|3C_0 +(2e+k+2)F|$ with $k\geq 1$. 
\item $\tilde W = \IF_e$, $\tilde \Lambda$ a nodal curve in $|3C_0 +(2e+2)F|$ with $e\neq 0$.
\end{enumerate}
Here $\IF_e= \IP(\ko_{\IP^1}\oplus \ko_{\IP^1}(-e))\to \IP^1$ denotes a Hirzebruch-surface with section $C_0$ such that $C_0^2 = -e$ and fibre $F$.

A lc pair $(W', \Lambda')$ such that there is a birational morphism $g\colon W'\to W$  with $K_W'+\Lambda'=g^*(K_W+\Lambda)$ big and nef  and $W$ as above is called $p_g$-extremal pair.
\end{prop}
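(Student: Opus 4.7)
The plan is to translate the question to the smooth setting via the minimal resolution and then apply the classification of extremal log pairs on the Noether line due to Sakai and Tsunoda--Zhang (cf.\ the references in Proposition~\ref{prop: normal inequality}).

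The first step is to pass to the minimal resolution $f\colon\tilde W\to W$. Because $(W,\Lambda)$ is Gorenstein log canonical with reduced boundary, $\tilde\Lambda:=f^*\Lambda$ is an integral nodal divisor and the crepant identity $K_{\tilde W}+\tilde\Lambda=f^*(K_W+\Lambda)$ holds (the exceptional components with discrepancy $-1$ are absorbed into $\tilde\Lambda$ with coefficient one). Kawamata--Viehweg vanishing ensures that $R^if_*\ko_{\tilde W}(K_{\tilde W}+\tilde\Lambda)=0$ for $i>0$, so both $p_g$ and $(K+\Lambda)^2$ are preserved. Writing $L:=K_{\tilde W}+\tilde\Lambda$, the extremal hypothesis passes to $(\tilde W,\tilde\Lambda)$, namely $L$ is big and nef with $h^0(L)=L^2+2$.

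Next, Kawamata--Viehweg gives $h^i(L)=0$ for $i\geq 1$, so Riemann--Roch becomes
\[ L^2+2 \;=\;\chi(L)\;=\;\chi(\ko_{\tilde W})+\tfrac12\,L\cdot\tilde\Lambda. \]
A standard analysis of the rational map $\phi_L$ together with Castelnuovo's rationality criterion shows that such a large $h^0(L)$ relative to $L^2$ can only occur on a rational surface, so $\chi(\ko_{\tilde W})=1$ and consequently $L\cdot\tilde\Lambda=2L^2+2$. For the third step I would run a $K_{\tilde W}$-MMP towards a minimal rational model: for each $(-1)$-curve $E$, nefness of $L$ combined with $K_{\tilde W}\cdot E=-1$ forces $\tilde\Lambda\cdot E\geq 1$, so after each contraction the boundary remains reduced and nodal. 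The tight equality on the log Noether line rigidly constrains the incidence between $(-1)$-curves and nodes of $\tilde\Lambda$, and the main technical obstacle is to show that this rigidity prevents any further contraction, so that $\tilde W$ is already $\IP^2$ or a Hirzebruch surface $\IF_e$.

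Finally, on each minimal rational model the divisor class of $\tilde\Lambda$ is pinned down by the numerical data. On $\IP^2$, setting $\tilde\Lambda\in|dH|$ yields $L^2=(d-3)^2$ and $p_g=\binom{d-1}{2}$, so the equation $p_g=L^2+2$ selects exactly $d=4$ and $d=5$. On $\IF_e$, writing $\tilde\Lambda\in|aC_0+bF|$ and imposing nefness and bigness of $L=K_{\IF_e}+\tilde\Lambda$, a direct Riemann--Roch computation of $L^2$ and $p_g$ pins down $a=3$ with $b$ in the two listed ranges, the two cases being distinguished according to whether $L\cdot C_0>0$ or $L\cdot C_0=0$. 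This matches the four cases in the statement.
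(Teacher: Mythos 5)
The paper itself gives no proof of this proposition: it is explicitly ``extracted from'' Sakai \cite{sakai80} and Tsunoda--Zhang \cite{TZ92}, so your sketch has to be measured against those classical arguments rather than against anything in the text. Your overall strategy --- pass to the minimal resolution, analyse the linear system $|L|$ for $L=K_{\tilde W}+\tilde\Lambda$, then compute on minimal rational surfaces --- is the right one, but two intermediate steps fail as written and a third is exactly the point you concede you have not proved.

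Concretely: (1) Kawamata--Viehweg does not give $h^1(L)=0$. It applies to $K_{\tilde W}+N$ with $N$ big and nef (or with a klt fractional boundary), not to $K_{\tilde W}+\tilde\Lambda$ with $\tilde\Lambda$ an arbitrary reduced divisor; already for $\Lambda=0$ and $K_W$ ample one has $h^1(K_W)=q(W)$, which need not vanish. So the Riemann--Roch identity you derive, and the ensuing appeal to Castelnuovo's criterion, cannot be the route to rationality. (2) The $K_{\tilde W}$-MMP step is both unnecessary and problematic: contracting a $(-1)$-curve $E$ with $L\cdot E>0$ destroys the relation $L=f^*(K_W+\Lambda)$ and changes $p_g$ and $L^2$, while contracting one with $\tilde\Lambda\cdot E\geq 3$ or a tangency produces a non-nodal boundary; and ``rigidity prevents further contraction'' is precisely the claim you leave open. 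The classical argument closes both gaps at once: in the equality case the image of $\phi_{|L|}$ must be two-dimensional (the one-dimensional case yields $h^0(L)\leq L^2+1$, as in the proof of Lemma~\ref{lem: log K^2+1}), the fixed part vanishes by the Hodge index theorem, and $\phi_{|L|}$ is birational onto a surface of degree $h^0(L)-2=L^2$ in $\IP^{L^2+1}$, i.e.\ a surface of minimal degree. Since $K_W+\Lambda$ is ample, $\phi_{|L|}$ contracts exactly the $f$-exceptional curves, so $W$ is $\IP^2$, the Veronese surface, a smooth scroll, or a cone over a rational normal curve, and reading off $\tilde W$ and $\tilde\Lambda=\phi_{|L|}^*\ko(1)-K_{\tilde W}$ gives the four cases. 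Finally, your numerical check does not by itself ``pin down $a=3$'': for instance $\tilde\Lambda=4C_0+5F$ on $\IF_1$ also satisfies $h^0(L)=L^2+2$, and one needs the minimal-degree argument (here $L\cdot C_0=0$ forces the $(-1)$-curve $C_0$ to be contracted, so the resolution was not minimal) to recognise it as case (ii) in disguise.
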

\begin{rem}\label{rem: lin sys}
 Note that if $(W', \Lambda')$ is $p_g$-extremal then $|K_{W'}+\Lambda'|$ defines a morphism whose image is the log-canonical model. In particular, each component of the boundary is either contracted to a point or embedded.
\end{rem}

\begin{lem}\label{lem: log K^2+2} Let $(W', \Lambda')$ be a $p_g$-extremal pair and let $ C_1, C_2$ be components of $\Lambda'$ such that $(K_{W'}+\Lambda')C_i>0$. Then the following holds:
\begin{enumerate}
 \item 
\[r(C_1) = \dim \im \left(\mathrm{Res}\colon H^0(W',  K_{W'}+\Lambda')\to H^0(C_1, (K_{W'}+\Lambda')\restr{C_1})\right)\geq 2.\]
If equality holds then $C_1$ is mapped to a line by the map $\psi$ associated to the linear system $|K_{W'}+\Lambda'|$.
\item If $C_1$ and $C_2$ are two different components then 
\[r(C_1+C_2) = \dim \im \left(\mathrm{Res}\colon H^0(W',  K_{W'}+\Lambda')\to H^0((K_{W'}+\Lambda')\restr{C_1\cup C_2})\right)\geq 3.\]
\end{enumerate}
\end{lem}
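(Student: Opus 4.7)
The plan is to interpret $r(C_1)$ and $r(C_1+C_2)$ geometrically via the morphism defined by the log-canonical linear system. By Remark~\ref{rem: lin sys}, the system $|K_{W'}+\Lambda'|$ is base-point-free and defines a morphism $\psi\colon W'\to \IP^N$ with $N = p_g(W',\Lambda')-1$, whose image is the log-canonical model of $(W',\Lambda')$. Equivalently, $\psi$ factors as $\iota\circ g$, where $g\colon W'\to W$ is the contraction to the log-canonical model and $\iota\colon W\hookrightarrow \IP^N$ is the embedding given by $|K_W+\Lambda|$. For any subscheme $C\subseteq W'$, the kernel of the restriction map $H^0(K_{W'}+\Lambda')\to H^0((K_{W'}+\Lambda')\restr C)$ consists precisely of the sections whose associated hyperplane in $\IP^N$ contains $\psi(C)$, so $r(C) = 1+\dim\langle\psi(C)\rangle$, where $\langle\cdot\rangle$ denotes linear span.

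For part (1), the hypothesis $(K_{W'}+\Lambda')C_1>0$ combined with Remark~\ref{rem: lin sys} implies that $C_1$ is embedded by $\psi$, so $\psi(C_1)$ is an irreducible curve in $\IP^N$; in particular $\dim\langle\psi(C_1)\rangle\geq 1$, giving $r(C_1)\geq 2$. Equality forces $\langle\psi(C_1)\rangle$ to be a line, which, since $\psi(C_1)$ is an irreducible curve contained in it, must coincide with $\psi(C_1)$.

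For part (2), the crux will be to show $\psi(C_1)\neq\psi(C_2)$ as subsets of $\IP^N$. First, I would argue that $g(C_1)\neq g(C_2)$ in $W$: neither $C_i$ is contracted by $g$ since $(K+\Lambda)C_i>0$, and if $g(C_1)=g(C_2)$ then a generic fiber of the birational morphism $g$ over this common image would be a single point lying on $C_1\cap C_2$, forcing the intersection to be infinite and contradicting distinctness of the two irreducible components. Since $\iota$ is an embedding, distinctness in $W$ propagates to $\IP^N$, so $\psi(C_1)\neq\psi(C_2)$. If the union $\psi(C_1)\cup\psi(C_2)$ were contained in a line $L\subset\IP^N$, each irreducible curve $\psi(C_i)\subseteq L$ would have to equal $L$, again contradicting $\psi(C_1)\neq\psi(C_2)$. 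Hence $\dim\langle\psi(C_1)\cup\psi(C_2)\rangle\geq 2$, giving $r(C_1+C_2)\geq 3$.

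The one subtlety I expect to require care is justifying that $\iota$ is a genuine embedding in every case of Proposition~\ref{prop: normal equality}. Remark~\ref{rem: lin sys} already packages this, but it is also easy to verify directly: in cases (1) and (2) the relevant system is a hyperplane class on $\IP^2$ or the degree-two Veronese; in case (3), $C_0+(e+k)F$ is very ample on $\IF_e$ because $k\geq 1$; and in case (4), the morphism defined by $|C_0+eF|$ contracts only $C_0$ and embeds the resulting cone, while the boundary components $C_i$ appearing in the lemma are necessarily different from $C_0$ since they carry positive $(K_{W'}+\Lambda')$-degree.
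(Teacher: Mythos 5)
Your proof is correct and takes essentially the same route as the paper's: both reduce to the explicit models of Proposition~\ref{prop: normal equality}, observe that $\psi$ embeds every boundary component of positive $(K_{W'}+\Lambda')$-degree (contracting only the section $C_0$ in the fourth case) with distinct components having distinct images, and read off $r(C)$ as one plus the dimension of the linear span of $\psi(C)$. Your extra details (the factorisation $\psi=\iota\circ g$ and the case-by-case very-ampleness check) merely make explicit what the paper delegates to Remark~\ref{rem: lin sys} and the classification.
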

\begin{proof} We may assume that $(W', \Lambda')$ is one of the smooth surfaces listed in  Proposition \ref{prop: normal equality}. 

In the first three cases $\psi$ is an embedding while in the fourth case $\psi$ is the minimal resolution of of the cone over a rational normal curve: it contracts the section $C_0$ (see \cite[Ch.\ V]{Hartshorne} for the case of $\IF_e$).  In the latter case,  the section satisfies $(K_{W'}+\Lambda')C_0=0$. Therefore $\psi$ is birational when restricted any of the $C_i$ and $C_1$ and $C_2$ do not have the same image.

Looking only at one component $C_1$ gives $r(C_1)\geq2$ with equality if and only if $C_1$ is mapped to a line, thus \refenum{i}.

Two (different) curves in $\IP^{p_g(W', \Lambda')-1}$ span a projective space of dimension at least two, which gives at least three sections in the restriction, i.e., $r(C_1+C_2)\geq 3$. 
\end{proof}

\begin{lem}\label{lem: log K^2+1}
  Let $(W, \Lambda)$ be a log canonical pair with $W$ smooth and $K_W+\Lambda$ big and nef. If $p_g(W, \Lambda)=(K_W+\Lambda)^2+1$ then for  every  component $C$ of $\Lambda$ such that $(K_W+\Lambda)C>0$ we have
\[r(C) = \dim \im \left(\mathrm{Res}\colon H^0(W, K_W+\Lambda)\to H^0(C, (K_W+\Lambda)\restr C)\right)\geq 1.\]
\end{lem}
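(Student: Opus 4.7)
The plan is to argue by contradiction, in the spirit of Lemma \ref{lem: log K^2+2}. Assume some component $C$ of $\Lambda$ satisfies $(K_W+\Lambda)C>0$ and $r(C)=0$, and set $\Lambda'=\Lambda-C$. Tensoring the structure sequence $0\to\ko_W(-C)\to\ko_W\to\ko_C\to 0$ with $\omega_W(\Lambda)$ and using the adjunction isomorphism $\omega_W(\Lambda)|_C\cong\omega_C(\Lambda'|_C)$ identifies the kernel of the restriction to $C$ with $H^0(W,K_W+\Lambda')$, whence
\[
r(C)=h^0(K_W+\Lambda)-h^0(K_W+\Lambda').
\]
The assumption $r(C)=0$ therefore gives $h^0(K_W+\Lambda')=h^0(K_W+\Lambda)=(K_W+\Lambda)^2+1$.

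Next I would apply Proposition \ref{prop: normal inequality} to the log canonical pair $(W,\Lambda')$. Assuming for the moment that $K_W+\Lambda'$ is also big and nef, this yields
\[
(K_W+\Lambda)^2+1\leq (K_W+\Lambda')^2+2.
\]
Expanding $(K_W+\Lambda')^2=(K_W+\Lambda)^2-2(K_W+\Lambda)C+C^2$ and using the identity $(K_W+\Lambda)C-C^2=(K_W+\Lambda')C$ makes this collapse to $(K_W+\Lambda)C+(K_W+\Lambda')C\leq 1$. Since $K_W+\Lambda$ is Cartier, $(K_W+\Lambda)C$ is a positive integer, and nefness of $K_W+\Lambda'$ gives $(K_W+\Lambda')C\geq 0$. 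I would therefore force $(K_W+\Lambda)C=1$, $(K_W+\Lambda')C=0$, $C^2=1$, and equality in Proposition \ref{prop: normal inequality}, so that $(W,\Lambda')$ is $p_g$-extremal.

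I would then invoke the birational morphism $g$ to the classifying smooth model provided by Proposition \ref{prop: normal equality} and examine the four possibilities. If $g$ contracts $C$, then $C^2<0$, contradicting $C^2=1$. Otherwise $g_*C$ is an irreducible curve on the model with zero intersection against the log canonical class there. On $\IP^2$ with nodal quartic or quintic boundary and on $\IF_e$ with $k\geq 1$, the log canonical class $K+\Lambda'_0$ is ample ($\sim H$, $\sim 2H$, respectively $C_0+(e+k)F$, positive on every irreducible $aC_0+bF$), so no such curve exists. In the remaining case, $\IF_e$ with $k=0$ and $e\geq 1$, the only curve contracted by $|C_0+eF|$ is the negative section $C_0$, so $g_*C=C_0$ and $C^2\leq C_0^2=-e\leq-1$, again contradicting $C^2=1$.

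The step I expect to be most delicate is establishing that $K_W+\Lambda'$ is indeed big and nef, as required to invoke Proposition \ref{prop: normal inequality}. My plan to overcome this is to run a $(K_W+\Lambda')$-MMP on $W$: contracting a $(-1)$-curve $E$ disjoint from $\Lambda'$ preserves $h^0(K_W+\Lambda')$ and only increases $(K_W+\Lambda')^2$, so after finitely many such steps one reaches a model to which Proposition \ref{prop: normal inequality} applies. Carefully tracking $(K_W+\Lambda)C$ and $C^2$ through the MMP, and handling the case of $(-1)$-curves meeting or contained in $\Lambda'$ where the pair itself is modified, is where the real bookkeeping lies.
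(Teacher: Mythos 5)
Your reduction $r(C)=h^0(K_W+\Lambda)-h^0(K_W+\Lambda-C)$ and the arithmetic that follows are correct \emph{provided} Proposition~\ref{prop: normal inequality} applies to $(W,\Lambda')$ with $\Lambda'=\Lambda-C$, and you have correctly located the weak point: that proposition needs $K_W+\Lambda'$ big \emph{and nef}, and neither property survives the removal of $C$ in general. This is a genuine gap, not a routine verification, for two reasons. First, the proposed repair by a $(K_W+\Lambda')$-MMP does not preserve the numbers your contradiction depends on: if a contracted extremal curve $E$ appears in $K_W+\Lambda'=\phi^*(K_{W_1}+\Lambda'_1)+aE$ with $a>0$, then $(K+\Lambda')^2$ jumps by $a^2$ at that step, so the inequality you extract degrades to $(K_W+\Lambda)C+(K_W+\Lambda')C\leq 1+\sum a_i^2$, which no longer forces $(K_W+\Lambda)C=1$, $(K_W+\Lambda')C=0$ and $C^2=1$. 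Moreover the curves that must be contracted are not only $(-1)$-curves disjoint from $\Lambda'$: a component of $\Lambda'$, or a curve meeting $C$ many times, can be $(K_W+\Lambda')$-negative without being $K_W$-negative, and tracking $C$ and its self-intersection through such contractions is precisely the bookkeeping you leave open. Second, even on a nef model bigness can fail: $h^0(K_W+\Lambda')=(K_W+\Lambda)^2+1\geq 2$ does not prevent $|K_W+\Lambda'|$ from being composed with a pencil with $(K_W+\Lambda')^2=0$, in which case Proposition~\ref{prop: normal inequality} is unavailable and the contradiction evaporates. As it stands the argument is a plausible programme rather than a proof.

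The paper avoids all of this by never altering the pair: $r(C)=0$ means exactly that $C$ lies in the fixed part $G$ of $|K_W+\Lambda|=|H|+G$, and the proof shows directly that $(K_W+\Lambda)G=0$, splitting into the cases where $\phi_{|H|}$ has two-dimensional image (Hodge index theorem plus \cite[Lemma~2.1]{TZ92}) and one-dimensional image (following \cite[Thm.~6.1]{sakai80}). Since $K_W+\Lambda$ is big and nef, $(K_W+\Lambda)G=0$ forces degree zero on every component of $G$, so any $C$ with $(K_W+\Lambda)C>0$ moves and $r(C)\geq 1$. If you want to rescue your approach, the statement $(K_W+\Lambda)G=0$ is in any case what you would end up having to prove.
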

\begin{proof}
Without loss of generality we may assume that $W$ does not contain $(-1)$-curves $E$ such that $(K_W+B)E=0$. We write 
\[ |K_{W} + \Lambda| = |H|+G\]
where $G$ is the fixed curve and $|H|$ has at most isolated fixed points; clearly
\[h^0(W, K_W+\Lambda)= h^0(W, H).\]
Our aim is to show that $C$ is not a fixed component of the linear system $|K_{W} + \Lambda|$, that is, $C$ is not contained in $G$. It suffices to show that $K_{W}+\Lambda$ does not have positive degree on any component of $G$, which is equivalent to $(K_{W}+\Lambda)G=0$ because $K_{W}+\Lambda$ is big and nef.
\paragraph{Case 1:} Suppose that the image of the map $\phi$ associated to $|H|$ has dimension two. 
Then both  $H$ and $K_{W}+\Lambda$ are big and nef and thus by assumption
\begin{equation}
 \begin{split}
h^0(W, H)&=h^0(W, K_W+\Lambda)=(K_{W}+\Lambda)^2+1=(H+G)^2+1\\&=H^2+HG+(H+G)G+1\geq H^2+1. 
 \end{split}
\end{equation}
In case of equality we have $HG = (H+G)G=0$ which implies $HG=G^2=0$ and thus $G=0$ by the Hodge-index-theorem and we are done.

So assume that $h^0(W, H)> H^2+1$. Then by  \cite[Lemma~2.1]{TZ92} we have $h^0(W, H)= H^2+2$. Thus 
\[ 1 = (K_{W}+\Lambda)^2- H^2= 2HG+G^2=HG+(K_{W}+\Lambda)G.\]
 If $HG=0$ then $G^2\leq 0$ by the Hodge-index-theorem which gives a contradiction. So $HG=1$ and $(K_{W}+\Lambda)G=0$ and we are done.
\paragraph{Case 2:} 
Suppose that the image of the map $\phi$ associated to $|H|$ has dimension 1.
Our argument follows  closely the proof of \cite[Thm.\ 6.1]{sakai80}. Let $p\colon W^*\to W$ be a minimal resolution of the linear series $|H|$ so that $|p^*H| = |H^*|+E$ and $H^*$ has no base-points.
Denoting by $A$ the image of $\phi$ and by $A^*$ the Stein factorisation we have a diagram
\[
\begin{tikzcd}
 W \rar[dashed]{\phi} & A \rar[hookleftarrow]&  \IP^{h^0(W, K_W+\Lambda)-1}\\
W^*\uar{p} \arrow{ur}\rar{\psi} & A^*\uar{s}
\end{tikzcd}
\]
Then there is a divisor $H$ of degree $n =\deg s\cdot\deg \ko_A(1)\geq h^0(W, K_W+\Lambda)-1$
 on $A^*$ such that $H^*= \psi^*H$.
If $F^*$ is a fibre of $\psi$ and $F$ is its image in $W$ then 
\[ (K_{W}+{\Lambda})^2 = n^2F^2+nFG +(K_{W}+{\Lambda})G\geq n^2F^2+nFG\geq n^2F^2 .\]
Combining all these inequalities we deduce:
\begin{itemize}
 \item If $F^2>0$ then $h^0(W, K_W+\Lambda)-1\leq (K_{W}+{\Lambda})^2$.  Assuming equality we have $(K_{W}+{\Lambda})^2=n=F^2=1$ and $FG = (K_{W}+{\Lambda})G=0$. Consequently $h^0(W, K_W+\Lambda) = 2$ and the fixed part $G=0$.
\item If $F^2 = 0$ then $W=W^*$ and $(K_{W}+{\Lambda})^2 = 2nFG + G^2$. Since $(K_{W}+{\Lambda})^2>0$, the fixed part $G$ is non-empty and $FG>0$. Thus we get 
\begin{align*}
 (K_{W}+{\Lambda})^2 = nFG +(K_{W}+{\Lambda})G \geq n\geq h^0(W, K_W+\Lambda)-1
\end{align*}
Assuming $(K_{W}+{\Lambda})^2=h^0(W, K_W+\Lambda)-1$ we have $(K_{W}+{\Lambda})G=0, FG=1$ and $\deg A=h^0(W, K_W+\Lambda)-1$.
\end{itemize}
So in both cases $K_{W}+{\Lambda}$ has degree zero on every component of the fixed locus of the linear system $|K_{W}+{\Lambda}|$, which concludes the proof.
\end{proof}

\subsection{Proof of Theorem \ref{thm: log noether}} Using \eqref{eq: log K^2} and \eqref{eq: log h^0} we  work on the minimal semi-resolution. If $X$ is irreducible then  the proof of the theorem is a simple corollary of the normal case, so the complexity comes from the fact that we glue several components to one surface. More precisely, we have to control how many sections on the normalisation do not descend to the stable log surface. We will proceed by induction on the number $k$ of irreducible components and in each step define a boundary divisor suited to our purpose. 

To set up the induction we order the  components of $\bar Y$ such that for $1\leq i\leq k$ the surface 
\[U^i:=Y_1\cup\dots \cup Y_i\]
is connected in codimension 1. 
We define a boundary $\Lambda^{i}$ on $U^i$ by the equation
\[ (D_{\bar U^i}+\bar \Lambda^{i})\restr {\bar Y_j}= D_{\bar Y_j}+\Delta_{\bar Y_j} +\bar Z_j \quad (j=1, \dots, i)\] 
where $D_{\bar U^i}$ is the conductor divisor of the normalisation $\bar U^i=\bigsqcup_{j=1}^i\bar Y_j \to U^i$. In other words, we divide the boundary on $\bar Y$ in a part that is the conductor divisor of  $U^i$ and the rest.

Note that since $Y$ is semi-smooth the surface $U^i$ is also a semi-smooth scheme, $(U^i, \Lambda^i)$ is a log surface and  and $(U^k, \Lambda^k) = (Y, \Delta+Z)$.

To conclude we show that each of the pairs $(U^i, \Lambda^i)$ satisfies \[p_g(U^i, \Lambda^i)\leq (K_{U^i}+\Lambda^i)^2+2\] and in the case of equality the following four properties hold for each $1\leq j\leq i$:
\begin{enumerate}
  \item[$(E_1)$]  The pair $(\bar Y_j,D_{\bar Y_j}+\Delta_{\bar Y_j} + \bar Z_j)$ is a $p_g$-extremal pair (see Proposition~\ref{prop: normal equality}).
 \item[$(E_2)$] Each irreducible component of $U^j$ is smooth, that is, each component of the conductor divisor  $D_{U^j}$ is contained in two different irreducible components  of $U^j$.
 \item[$(E_3)$] The linear system $|K_{U^j} + \Lambda^j|$ defines a morphism whose image is the semi-log-canonical model.
 \item[$(E_4)$] The intersection $U_{j-1}\cap Y_j$, called the connecting curve for $U_{j-1}$ and $Y_j$, is a single smooth rational curve $C_j$ that is mapped isomorphically to a line  by the morphism associated to the linear system $|K_{U^j} + \Lambda^j|$ and $ \deg (K_{U^j} + \Lambda^j)\restr C_j = 1$. In particular, $\Lambda^j\neq0$.
\end{enumerate}

\paragraph*{\textbf{Base case of the induction:}}
The surface $U^1=Y_1$ is irreducible and by \eqref{eq: log h^0} and Proposition \ref{prop: normal inequality} we have 
\[ p_g(U^1, \Lambda^1)\leq  p_g(\bar U^1, D_{\bar U^1}+\bar\Lambda^1)\leq (K_{\bar U^1}+D_{\bar U^1}+\bar \Lambda^1)^2+2= (K_{ U^1}+\Lambda^1)^2+2.
\]
If the equality  $p_g(U^1, \Lambda^1)= (K_{ U^1}+\Lambda^1)^2+2$ holds then  the normalisation  $(\bar U^1, D_{\bar U^1}+\bar \Lambda^1)$ is a  $p_g$-extremal pair (see Proposition~\ref{prop: normal equality}); this shows $(E_1)$.

 Moreover,  $\eta^*|K_{U^1}+\Lambda^1|=|K_{\bar U^1}+D_{\bar U^1}+\bar \Lambda^1|$  and hence the map induced by the linear system $|K_{\bar U^1}+D_{\bar U^1}+\bar \Lambda^1|$ on the conductor divisor $D_{\bar U^1}$ factors through the normalisation map and  is  $2:1$. By Remark \ref{rem: lin sys} any component of the boundary of $(\bar U^1, D_{\bar U^1}+\bar \Lambda^1)$ is either contracted or embedded by this linear system. Consequently, the conductor is empty and $U^1$ is normal, which gives $(E_2)$, and also $(E_3)$ again by Remark \ref{rem: lin sys}.

The last property $(E_4)$ is empty for irreducible surfaces, except for the fact that $\Lambda\neq 0$, which is true because the conductor is empty.

\paragraph*{\textbf{Induction step:}}
Now assume $U^{i-1}$ satisfies the inequality and conditions $(E_1)$--$(E_4)$ in case of equality.

Let $C_i=U^{i-1}\cap Y_i$ be the connecting curve. Note that $C_i$ is a  possibly non-connected smooth curve. By Remark~\ref{rem: pushout sections}
\begin{equation}\label{eq: fibre product}
H^0(U^i, K_{U^i}+\Lambda^i) = H^0(U^{i-1}, K_{U^{i-1}}+\Lambda^{i-1})\times_{C_i}H^0(Y_i, K_{Y_i}+\Lambda^i\restr{Y_i}+C_i) 
\end{equation}
where the right hand side is the vector space fibre product induced by the residue maps $r_{U^{i-1}}$ and $r_{Y_i}$ to $C_i$.

Let $r_{\bar Y_i}$ be the residue map to $C_i$ on the normalisation. Pulling back sections on $Y_i$ that vanish along $C_i$ to the normalisation $\bar Y_i$ is injective, so we can estimate
\begin{equation}\label{eq: bound on Y_i}
 \begin{split}
 &h^0(Y_i, K_{Y_i}+\Lambda^i\restr Y_i)- \dim \im (r_{Y_i})\\ 
\leq &h^0(\bar Y_i, K_{\bar Y_i}+ D_{\bar Y_i} + \bar\Lambda^i\restr{\bar Y_i})- \dim \im (r_{\bar Y_i})\\
  \leq &(K_{\bar Y_i}+ D_{\bar Y_i} + \bar\Lambda^i\restr{\bar Y_i})^2\hspace{0.5cm}\text{(Lemmata~\ref{lem: log K^2+2} and \ref{lem: log K^2+1})} \\
  =&(K_{Y_i}+\Lambda^i\restr{Y_i})^2.
 \end{split}
\end{equation}
From equation \eqref{eq: fibre product} we get
\begin{equation}
 \label{eq: final estimate}
\begin{split}
 h^0(U^i,K_{U^i}+\Lambda^i) &\leq h^0(U^{i-1}, K_{U^{i-1}}+\Lambda^{i-1}) + h^0(Y_i, K_{Y_i} + \Lambda^i\restr Y_i+C_i) \\
              & \hspace{4cm}- \max\{\dim\im (r_{U_{i-1}}), \dim\im (r_{Y_i})\}\\
&\leq h^0(U^{i-1}, K_{U^{i-1}}+\Lambda^{i-1}) + h^0(Y_i, K_{Y_i} + \Lambda^i\restr Y_i+C_i)\\
              & \hspace{4cm}-  \dim\im (r_{Y_i})\\
  & \leq  (K_{U^i}+\Lambda^i)^2 +2
\end{split}
\end{equation}
where we have used the induction hypothesis and \eqref{eq: bound on Y_i}. 

It remains to prove that properties $(E_1)$ to $(E_4)$ hold for $U_i$ in the case of equality.
So assume that $h^0(U^i,K_{U^i}+\Lambda^i) = (K_{U^i}+\Lambda^i)^2 +2$. Then all inequalities in \eqref{eq: bound on Y_i} and \eqref{eq: final estimate} are equalities, which implies
\begin{enumerate}
 \item $r_{U^{i-1}}$ and $r_{Y_i}$ have the same image;
  \item $h^0(Y_i, K_{Y_i}+\Lambda^i\restr Y_i+C_i)= (K_{Y_i}+\Lambda^i\restr{Y_i}+C_i)^2+\dim \im (r_{Y_i})$;
  \item $h^0(U^{i-1}, K_{U^{i-1}} )= (K_{U^{i-1}} + \Lambda^{i-1})^2+2$, so $U^{i-1}$ satisfies $(E_1)$--$(E_4)$.
\end{enumerate}
By $(E_3)$, we have $\dim \im (r_{U^{i-1}})\geq 2$ because every component of $C_i$ is embedded by $|K_{U^{i-1}}+\Lambda^{i-1}|$. Thus from \refenum{i} and \refenum{ii} we get
\[ h^0(Y_i, K_{Y_i}+\Lambda^i\restr Y_i+C_i)\geq  (K_{Y_i}+\Lambda^i\restr{Y_i}+C_i)^2+2,\]
so by the base case for the induction equality holds and  the pair  $(Y_i, \Lambda^i\restr{Y_i}+C_i)$ satisfies properties $(E_1)$--$(E_4)$. In particular, $U^i$ satisfies  $(E_1)$.
Moreover,  $\dim \im (r_{Y_i})= 2$ and thus by Lemma \ref{lem: log K^2+2} the connecting curve $C_i$ is isomorphic to $\IP^1$ and $K_{U^i}+\Lambda^i$ has degree 1 on $C_i$, which gives the first part of $(E_4)$ and also that $\Lambda^i\restr Y_i\neq 0$ by the classification of $p_g$-extremal surfaces.  Note also that by dimension reasons both  $r_{U^{i-1}}$ and $ r_{Y_i}$ are surjective.

By \eqref{eq: fibre product} the space  $|K_{U^i}+\Lambda^i|^*$  is spanned by two natural subspaces  $A:=|K_{U^{i-1}}+\Lambda^{i-1}|^*
$ 
and $B:=
| K_{Y_i}+\Lambda^i\restr{Y_i}+C_i|^*
$ and their intersection is  the line $A\cap B = 
|(K_{U^i}+\Lambda^i)\restr C_i|^*
$. 
Thus 
$|K_{U^i}+\Lambda^i|\restr{U^{i-1}}$
embeds $U^{i-1}$ into the subspace $A$ by $|K_{U^{i-1}}+\Lambda^{i-1}|$  and $|K_{U^i}+\Lambda^i|\restr{Y_i}$ embeds $Y_i$ into the subspace $B$ via $|K_{Y_i}+\Lambda^i\restr{Y_i}+C_i|$ such that $C_i$ is embedded as $A\cap B$. In particular, $U^{i-1}$ and $Y_i$ have independent tangent directions along $C_i$ and thus the linear system is an embedding of all of $U_i$ such that $C_i$ is mapped to a line, which proves the second part of $(E_4)$ and $(E_3)$. 
\hfill\qed

The last part of the proof shows the following:
\begin{cor}\label{cor: nonnormal equality}
 Let $(X, \Delta)$ be a Gorenstein stable log surface such that $p_g(X, \Delta)=(K_X+\Delta)^2+2$. Let $X_1, \dots, X_k$ be the irreducible components of $X$. We choose the order such that $V_i=X_1\cup\dots \cup X_i$ is connected in codimension 1. Then
\begin{enumerate}
 \item every irreducible component  $X_i$ is a normal stable log surface as in Proposition \ref{prop: normal equality},
\item the linear system $|K_X+\Delta|$ defines an embedding $\phi\colon X\into \IP=|K_X+\Delta|^*$,
\item $C_i=V_i\cap X_{i+1}$ is a smooth irreducible rational curve,
\item the linear span of $\phi(V_i)$ and of $\phi(X_{i+1})$ intersect exactly in the line $\phi(C_i)$.
\end{enumerate}
In particular, $\Delta\neq 0$.
\end{cor}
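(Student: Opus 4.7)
The plan is to unwind the equality case of the inductive argument already carried out for Theorem~\ref{thm: log noether}. Since
\[ h^0(X, \omega_X(\Delta)) = h^0(Y, \omega_Y(\Delta_Y + Z)) \quad \text{and} \quad (K_X+\Delta)^2 = (K_Y+\Delta_Y+Z)^2 \]
for the minimal semi-resolution $f\colon Y\to X$, the hypothesis $p_g(X,\Delta)=(K_X+\Delta)^2+2$ translates into the equality $p_g(U^k, \Lambda^k)=(K_{U^k}+\Lambda^k)^2+2$ in the notation of \S\ref{sect: setup noether}, with $U^k=Y$ and $\Lambda^k=\Delta_Y+Z$. As the induction step in that proof shows that equality at stage $i$ forces equality at stage $i-1$, the four properties $(E_1)$--$(E_4)$ then hold for every $U^j$, and the corollary amounts to reinterpreting them on $X$.

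First I would apply $(E_2)$: each $Y_j$ is smooth and meets the remaining components only along distinct curves of the conductor, so $Y_j\to X_j$ contracts only exceptional curves over slc singularities and $X_j$ is normal. Combined with $(E_1)$, which identifies $(Y_j, D_{Y_j}+\Delta_{Y_j}+Z_j)$ as a $p_g$-extremal pair, this realises $(X_j, \Delta|_{X_j}+D|_{X_j})$ as one of the cases of Proposition~\ref{prop: normal equality}, giving (1). Next, $(E_3)$ for $j=k$ states that $|K_Y+\Delta_Y+Z|=f^*|K_X+\Delta|$ defines a morphism whose image is the slc model, namely $X$ itself, so $|K_X+\Delta|$ yields the desired embedding $\phi\colon X\hookrightarrow |K_X+\Delta|^*$, proving (2). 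Property $(E_4)$ asserts that every connecting curve in the induction is a single smooth $\IP^1$ mapped isomorphically to a line; since these curves are disjoint from the exceptional locus of $f$, they descend to $V_i\cap X_{i+1}$ on $X$ with the same properties, giving (3). Finally, the fibre-product / linear-span computation at the end of the proof of Theorem~\ref{thm: log noether} transports through $\phi$ to show that the spans of $\phi(V_i)$ and $\phi(X_{i+1})$ meet exactly in the line $\phi(C_i)$, which is (4).

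The main obstacle is then the last assertion $\Delta\neq 0$, which is not encoded verbatim in $(E_1)$--$(E_4)$ since the induction worked throughout with the enlarged boundaries $\Lambda^j$. Here I would inspect the classification in Proposition~\ref{prop: normal equality} more closely: in each of the four cases the boundary curve is nodal of arithmetic genus at least two. Applied to the last component $X_k$ by (1), this boundary is $\Delta|_{X_k}+D|_{X_k}$. If $k=1$ then $D|_{X_k}=0$ and so $\Delta|_{X_1}\neq 0$ at once; if $k\geq 2$ then by (3) the conductor contribution $D|_{X_k}=V_{k-1}\cap X_k$ is a single smooth rational curve of arithmetic genus zero, which by itself cannot make up the required boundary. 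In either case $\Delta|_{X_k}\neq 0$, and the proof is complete.
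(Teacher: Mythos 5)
Your proof is correct and follows essentially the same route as the paper: the corollary is obtained there by simply observing that the equality case of the induction in the proof of Theorem~\ref{thm: log noether} establishes properties $(E_1)$--$(E_4)$ for every $U^j$, which are then reinterpreted on $X$ exactly as you do. Your extra paragraph deducing $\Delta\neq 0$ from the classification in Proposition~\ref{prop: normal equality} (the boundary of the last component cannot be the single line $C_{k-1}$) correctly fills in a detail the paper leaves implicit.
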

We believe that these conditions characterise uniquely Gorenstein stable log surfaces on the stable Noether line.

\section{Examples}
\subsection{Surfaces on a string: covering all invariants}\label{sect: big example}

 We now construct a series of  Gorenstein stable surfaces $X_{k,l}$ with invariants
\[ K_{X_{k,l}}^2 = k\quad \text{and} \quad 1-k \leq \chi(\ko_{X_{k,l}}) = l \leq \begin{cases} k+1 & k\geq 3\\k+2 & k=1,2\end{cases} \]

We want to underline the following consequences of these examples
\begin{enumerate}
 \item The surfaces $X_{k,1}$ have $P_2 =1$ yet $K_{X_{k,1}}^2 = k$ can be arbitrary large.
\item  For every $(a, b) \in \IN \times \IZ$ such that $1-a \leq b\leq a+1$ or $(a,b) \in\{ (1,3), (2,4)\}$ there exists a stable surface $X$ such that $K_X^2 = a$, $\chi(\ko_X) = b$ and with normalisation a disjoint union of projective planes. In particular,  all possible invariants for minimal surfaces of general type are realised by (non-normal) stable surfaces.
\end{enumerate}

\subsubsection{Construction principle}
By Theorem~\ref{thm: triple} a stable surface is uniquely determined by the triple $(\bar X, \bar D, \tau)$ consisting of the normalisation, the conductor divisor and an involution on the normalisation of $\bar D$ preserving the different. To construct $X_{k, \ast}$ we choose:
\begin{description}
 \item[Normalisation] $\bar X = \bigsqcup_{i=1}^k \IP^2$
\item[Conductor] $\bar D$ consists of four general lines in each copy of the plane.
\item[Involution] The normalisation of $\bar D$ is a disjoint union of  $4k$ copies of $\IP^1$ and each copy contains three marked points that map to nodes of $\bar D$. A fixed point free involution $\tau$ preserving the different is uniquely determined by specifying pairs of lines that are interchanged and the action of $\tau$ on the marked points.
\end{description}
The important information is contained in the choice of $\tau$. For simplicity, in each copy of $\IP^2$ we will glue two lines to each other. Different choices for this glueing gives us four different \emph{elementary tiles}, each containing two lines that still have to be glued. 
In a second step we choose $k$ of these elementary tiles and specify how to glue them in a circle to get $X_{k,l}$.

In fact, it will be convenient to work with the minimal semi-resolution 
\[f\colon Y_{k,l}\to X_{k,l},\]
which is more easily visualised and where some computations are more straightforward. So we blow up all intersection points of the four lines as seen in Figure \ref{fig: blow up} and construct a semi-smooth surface $Y_{k,l}$ from $k$ copies of $\tilde\IP^2$ and an involution  $\tilde\tau$ which specifies how to glue the $L_i$ in the various components to each other (preserving the intersection points with the exceptional curves).
\begin{figure}\caption{The basic normal tile.}\label{fig: blow up}
\small
 \begin{tikzpicture}
[curves/.style = {thick},
exceptional/.style = {red, thick}, 
nonnormal/.style ={very thick, blue},
scale =.6]
\node at (0,-4) {$\IP^2$};
\begin{scope}[curves]
 \draw (-.15, 1) -- (1.25, -3);
 \draw (.15, 1) -- (-1.25, -3);
\draw (-1.5, -2.5) -- (1.5, -.5);
\draw (-1.5, -.5) -- (1.5, -2.5);
\end{scope}
\draw[<-] (3, -1.5) to node[above] {\text{blow up}} node[below] {nodes} ++(2,0);

\begin{scope}[xshift = 8cm]
 \node at (0,-4) {$\tilde \IP^2$};
\begin{scope}[curves]
 \draw (-2,1) node[right]{$L_1$} -- (-2, -3);
\draw (2,1)node[right]{$L_2$} -- (2, -3);
\draw (-1.5, -.5) -- ( 1.5, -.5);\node at (0,-.5) [above right] {$L_3$};
\draw (-1.5, -2.5) -- ( 1.5, -2.5); \node at (0,-2.5) [above right] {$L_4$};
\end{scope}
\draw[exceptional] (-2.25, .5) -- (2.25, .5);
\begin{scope}[exceptional, yshift=.5cm]
\draw (0, -.75) -- (0, -3.25);
\draw (-2.25, -.5)  -- ++(-25:1.5cm);
\draw (-2.25, -2.5)  -- ++(-25:1.5cm);
\draw (2.25, -.5)  -- ++(205:1.5cm);
\draw (2.25, -2.5)  -- ++(205:1.5cm);
\end{scope}
\end{scope}
\end{tikzpicture}
\end{figure}

To recover $X_{k,l}$ from $Y_{k,l}$ we just need to contract all exceptional curves. Alternatively one can simply use the same involution in the triple $(\bar X_{k,l}, \bar D, \tau)$.

\begin{rem}\label{rem: sings}
The singularities of the constructed surfaces are very simple to describe (see  \cite{ksb88},  \cite{kollar12} or \cite[Sect.~4.2]{liu-rollenske12}): apart from smooth and normal crossing points we have only very special degenerate cusps. 

Assume  $p$ is  a degenerate cusp on $X_{k,l}$. Then its preimage $\inverse f (p)$  in the semi-resolution $Y_{k,l}$ is a cycle of $m$ $f$-exceptional curves, which become $(-1)$-curves in the normalisation. If $m=1$ then locally analytically $p\in X_{k,l}$ is isomorphic to the cone over a plane nodal cubic, if $m=2$ then locally analytically  $p\in X_{k,l}$ is isomorphic to the origin in $\{x^2+y^2z^2=0\}\subset \IC^3$ (sometimes called $T_{2,\infty, \infty}$), and if $m\geq 3$ then locally analytically $p\in X_{k,l}$ is isomorphic to the  cone over a cycle of $m$ independent lines in projective space.

 By \cite[Sect.~3.4]{stevens98} every such surface is locally smoothable, but usually there are global obstructions.
\end{rem}

\subsubsection{The elementary tiles} 
Consider  $\tilde \IP^2$, the plane blown up in the intersection points of four general lines $L_1, \dots, L_4$. There are six different ways to  glue $L_3$ to $L_4$ while preserving the intersections with the exceptional divisor, which up to isomorphism (renaming $L_1$ and $L_2$) reduce to four essentially different possibilities.  These are given in Figure~\ref{fig: elementary tiles}. The surfaces have normal crossing singularities along $L_{34}$, the image of $L_3$ and $L_4$.
Note that for esthetic reasons we sometimes use a partly mirrored version of Type A in later figures. 

\begin{figure}
\caption{The four different possibilities to glue $L_3$ to $L_4$ up to isomorphism.}
\label{fig: elementary tiles}
\small
\begin{tikzpicture}
[curves/.style = {thick},
exceptional/.style = {red, thick}, 
nonnormal/.style ={very thick, blue},
scale=.6
]

\begin{scope}[curves, xshift = 3cm]
\draw (-2,.5) node[right] {$L_1$}  -- (-2, -4);
\draw (2,.5) node[right] {$L_2$} -- (2, -4);
\draw[nonnormal] (-1.5, -2)-- (-.75, -2) node[below]{$L_{34}$} -- ( 1.5, -2);

\node [draw, rounded corners, thin] at (0, -4.5) {Type D};

\begin{scope}[exceptional]
\clip (-2.25, 1) rectangle (2.25, -5);
\draw  (-2.25, 0) -- (2.25, 0);
\draw[ every loop/.style={looseness=40, min distance=40}]
 (0,-2) ++ (-45: .5cm) to[out=160, in=-60] (0, -2) to[out=120, in =60,loop] () to[out=-120, in=20] ++(225:0.5cm);

\draw ( -1.25, -2)++(-45: .25cm) --   ++(135:2cm);
\draw ( -1.25, -2)++(45: .25cm) --   ++(-135:3cm);

\draw ( 1.25, -2)++(-135: .25cm) --   ++(45:2cm);
\draw ( 1.25, -2)++(135: .25cm) --   ++(-45:3cm);
\end{scope}
\end{scope}

\begin{scope}[curves, xshift=-3cm]
\draw (-2,.5) node[right] {$L_1$}  -- (-2, -4);
\draw (2,.5) node[right] {$L_2$} -- (2, -4);
\draw[nonnormal] (-1.5, -2)-- (-.75, -2) node[below]{$L_{34}$} -- ( 1.5, -2);

\node [draw, rounded corners, thin] at (0, -4.5) {Type C};

\begin{scope}[exceptional]
\clip (-2.25, 1) rectangle (2.25, -5);
\draw  (-2.25, 0) -- (2.25, 0);
\draw[ every loop/.style={looseness=40, min distance=40}]
 (0,-2) ++ (-45: .5cm) to[out=160, in=-60] (0, -2) to[out=120, in =60,loop] () to[out=-120, in=20] ++(225:0.5cm);

\draw ( -1.25, -2)++(-45: .25cm) --   ++(135:2cm);
\draw ( -1.25, -2)++(-135: .25cm) -- ( -1.25, -2) to[out = 45, in = 135, looseness = 2]  (2.25, -1.5);

\draw ( 1.25, -2)++(45: .25cm) -- ( 1.25, -2) to[out = -135, in = -45, looseness = 2]  (-2.25, -2.5);
\draw ( 1.25, -2)++(135: .25cm) --   ++(-45:3cm);
\end{scope}
\end{scope}

\begin{scope}[curves, xshift = -3cm, yshift = 6cm]
\draw (-2,.5) node[right] {$L_1$}  -- (-2, -4);
\draw (2,.5) node[right] {$L_2$} -- (2, -4);
\draw[nonnormal] (-1.5, -2)-- (-.75, -2) node[above]{$L_{34}$} -- ( 1.5, -2);

\node [draw, rounded corners, thin] at (0, -4.5) {Type A};

\draw[exceptional]  (-2.25, 0) -- (2.25, 0);
\begin{scope}[exceptional, cm ={1,0,0,-1,(0,-4)}]
\clip (-2.25, 1) rectangle (2.25, -5);

\draw ( -1.25, -2)++(-135: .25cm) -- ( -1.25, -2) to[out = 45, in = 135] 
 (0, -2)-- ++(-45: .25cm);
\draw ( 0, -2)++(-135: .25cm) -- (0, -2) to[out = 45, in =135] 
 (2.25, -1.42);

\draw ( -1.25, -2)++(-45: .25cm) --   ++(135:2cm);

\draw ( 1.25, -2)++(135: .25cm) --   ++(-45:3cm);
\draw ( 1.25, -2)++(45: .25cm) -- ( 1.25, -2) to[out = -135, in = -45, looseness = 2]  (-2.25, -2.5);
\end{scope}
\end{scope}

\begin{scope}[curves, xshift = 3cm, yshift = 6cm]
\draw (-2,.5) node[right] {$L_1$}  -- (-2, -4);
\draw (2,.5) node[right] {$L_2$} -- (2, -4);
\draw[nonnormal] (-1.5, -2)-- (.75, -2) node[below]{$L_{34}$} -- ( 1.5, -2);

\node [draw, rounded corners, thin] at (0, -4.5) {Type B};

\begin{scope}[exceptional]
\clip (-2.25, 1) rectangle (2.25, -5);
\draw  (-2.25, 0) -- (2.25, 0);

\draw ( -1.25, -2)++(-135: .25cm) -- ( -1.25, -2) to[out = 45, in = 135] 
 (0, -2)-- ++(-45: .25cm);
\draw ( 0, -2)++(45: .25cm) -- (0, -2) to[out = -135, in = 45] 
 (-2.25, -2.95);

\draw ( -1.25, -2)++(-45: .25cm) --   ++(135:2cm);

\draw ( 1.25, -2)++(-135: .25cm) --   ++(45:2cm);
\draw ( 1.25, -2)++(135: .25cm) --   ++(-45:3cm);

\end{scope}
\end{scope}

\end{tikzpicture}
\end{figure}
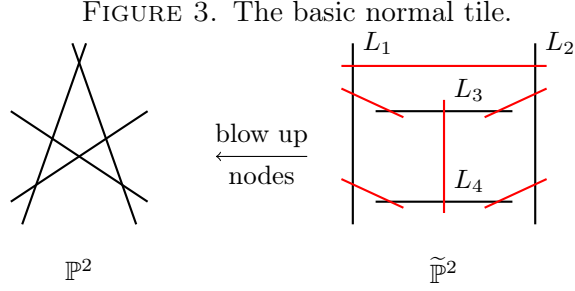

\subsubsection{Warm up: constructing $X_{1,3}$}
As a starting point we describe in detail the surfaces $Y_{1,3}$ and $X_{1,3}$. 

We start with one elementary tile of type C. Then an identification of  $L_1$ and $L_2$ preserving the intersection points with the exceptional divisor is uniquely determined by the images of the intersection points which we indicate with Greek letters. If we contract all exceptional curves we obtain the stable surface $X_{1,3}$. 
\begin{figure}\caption{The surfaces $Y_{1,3}$ and $X_{1,3}$}\label{fig: X_13}
\small
 \begin{tikzpicture}
[curves/.style = {thick},
exceptional/.style = {red, thick}, 
nonnormal/.style ={very thick, blue},
boundary/.style={nonnormal, dashed},
scale = .6]

\begin{scope}[curves]
\draw[boundary] (-2,.5) node[right] {$L_{12}$}  -- (-2, -4);
\draw[boundary] (2,.5) node[right] {$L_{12}$} -- (2, -4);
\draw[nonnormal] (-1.5, -2)-- (-.75, -2) node[below]{$L_{34}$} -- ( 1.5, -2);

\node  at (0, -4.5) {$Y_{1,3}$};

\node[below left] at (-2,0) {$\alpha$};
\node[left] at (-2,-1.3) {$\beta$};
\node[left] at (-2,-2.8) {$\gamma$};
\node[below right] at (2,0) {$\alpha$};
\node[right] at (2,-1.3) {$\beta$};
\node[right] at (2,-2.8) {$\gamma$};

\begin{scope}[exceptional]
\clip (-2.25, 1) rectangle (2.25, -5);
\draw  (-2.25, 0) -- (2.25, 0);
\draw[ every loop/.style={looseness=40, min distance=40}]
 (0,-2) ++ (-45: .5cm) to[out=160, in=-60] (0, -2) to[out=120, in =60,loop] () to[out=-120, in=20] ++(225:0.5cm);

\draw ( -1.25, -2)++(-45: .25cm) --   ++(135:2cm);
\draw ( -1.25, -2)++(-135: .25cm) -- ( -1.25, -2) to[out = 45, in = 135, looseness = 2]  (2.25, -1.5);

\draw ( 1.25, -2)++(45: .25cm) -- ( 1.25, -2) to[out = -135, in = -45, looseness = 2]  (-2.25, -2.5);
\draw ( 1.25, -2)++(135: .25cm) --   ++(-45:3cm);
\end{scope}
\end{scope}

\draw[->] (3, -1.5) to node[above] {resolve} node [below] {deg.\ cusps} ++(2,0);

\begin{scope}[xshift = 7cm, yshift =-1.5cm,  looseness=1.5]
\draw[rounded corners] (-.5, 2) rectangle (4.5, -2);
\node at (2, -3) {$X_{1,3}$};

 \draw[nonnormal, name path = L34] (0,1) to[bend right] (4,.5) node[above] {$L_{34}$};
\draw[boundary, name path = L12] (0,-1) to[bend left] (4,-.5)node[below] {$L_{12}$};
\fill [red, name intersections={of=L12 and L34}]
(intersection-1) circle (3pt) node [above] {$\beta$}
(intersection-2) circle (3pt) node [above]{$\gamma$};
\fill[red] (.3, .75) circle (3pt) node [above right] {$\delta$}
(.3,- .75) circle (3pt) node [below right] {$\alpha$};
\end{scope}
\end{tikzpicture}
\end{figure}
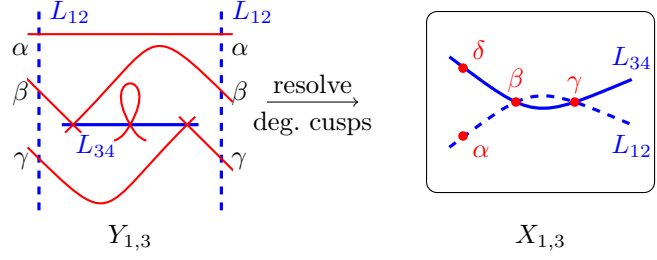
From Figure \ref{fig: X_13} we read off that on $Y_{1,3}$ we have four cycles of exceptional curves, two of length one and two of length two. In $X_{1,3}$ these are contracted to four degenerate cusps.
We will confirm below that $\chi(\ko_{X_{1,3}})=3$ and $K_{X_{1,3}}^2 = 1$.

Either computing the canonical ring directly using \cite[Prop.~5.8]{KollarSMMP} or by reverse engeneering one can check that $X_{1,3}$ is isomorphic to the weighted hypersurface of degree 10 in $\IP(1,1,2,5)$ with equation $z^2+y(x_1^2-y)^2(x_2^2-y)^2$. Geometrically, $X_{1,3}$ is a double cover of the quadric cone in $\IP^3$ branched over the vertex, a plane section and two double plane sections.
From either description we see that $X_{1,3}$ is the degeneration of a smooth Horikawa surface  \cite[VII.(7.1)]{BHPV}.
\subsubsection{Computation of invariants} We now explain how to compute the invariants of surfaces constructed as above.

We get the self-intersection of the canonical divisor by pulling back to the normalisation:
\[ K_{X_{k,l}}^2 =(K_{\bar X_{k,l}}+\text{conductor divisor})^2 =k(K_{\IP^2}+\text{four lines})^2=k.\]

For the holomorphic Euler characteristic we first compute on the semi-resolution $Y_{k,l}$. Let $D_{ Y_{k,l}}$ be the non-normal locus and $D_{\bar Y_{k,l}}$ be the conductor divisor in the normalisation $\bar Y_{k,l}$. Note that $D_{\bar Y_{k,l}}$ is the disjoint union of $4k$ copies of $\IP^1$ and $D_{ Y_{k,l}}$ is the disjoint union of $2k$ copies of $\IP^1$.
Then 
\[ \chi(\ko_{Y_{k,l}}) = \chi(\ko_{\bar Y_{k,l}})-
\chi(\ko_{D_{\bar Y_{k,l}}})+\chi(\ko_{D_{Y_{k,l}}})=
k\chi(\ko_{\tilde \IP^2})+(-4k+2k)\chi(\ko_{\IP^1}) = -k.
\]

Let $c$ be the number of degenerate cusps of $X_{k,l}$ which, by Remark \ref{rem: sings}, corresponds to the number of cycles of exceptional curves in $Y_{k,l}$.  Since by \cite[Lem.~A.7]{liu-rollenske12}  $R^1f_*\ko_{Y_{k,l}}$ is a skyscraper sheaf which has length 1 exactly at the degenerate cusps of $X_{k,l}$  we  have by the Leray spectral sequence and the above computation 
\[\chi(\ko_{X_{k,l}}) = \chi(\ko_{Y_{k,l}})+c = c-k.\]

Going back to $X_{1,3}$ constructed above, we see that there are exactly four degenerate cusps, so $\chi(\ko_{X_{1,3}})=3$ as claimed. 

\begin{rem} It is not very complicated to give a combinatorial formula for  the holomorphic Euler characteristic of $X_{k,l}$ without the use of the semi-resolution and thus avoiding the use of \cite[Lem.~A.7]{liu-rollenske12}.
 \end{rem}

\subsubsection{Construction of $X_{k,l}$ for $1-k\leq l\leq k+1$} The above computations tell us how to proceed: in order for the surface $X_{k,l}$ to have $K_{X_{k,l}}^2=k$ and $\chi(\ko_{X_{k,l}})=l$ we glue $k$ copies of the plane in such a way that the resulting surface has  exactly $k+l$ degenerate cusps. Alternatively, we construct  the semi-resolution $Y_{k,l}$ by glueing  $k$ elementary tiles  such that there are exactly $k+l$ cycles of exceptional curves. 

To construct $Y_{k, 1-k}$ and thus $X_{k,1-k}$ we glue $k$ components of type A in a circle as specified in Figure \ref{fig: Y_k1-k}. There is only one circle of exceptional curves, thus just $1=l+k$ degenerate cusp.
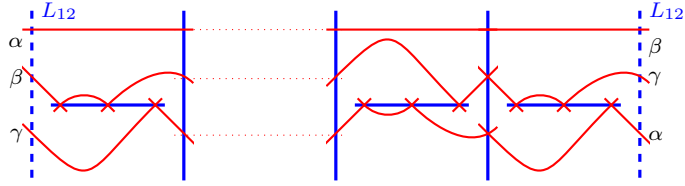
\begin{figure}[h!]\caption{The surface $Y_{k, k-1}$}\label{fig: Y_k1-k}
 \scriptsize
\begin{tikzpicture}
[curves/.style = {thick},
exceptional/.style = {red, thick}, 
nonnormal/.style ={very thick, blue},
boundary/.style={nonnormal, dashed},
scale = .5
]

\begin{scope}[curves]
\draw[boundary] (-10,.5) node[right] {$L_{12}$}  -- (-10, -4);
\draw[boundary] (6,.5) node[right] {$L_{12}$} -- (6, -4);
\foreach \x in {-9.5, -1.5,2.5} {\draw[nonnormal] (\x, -2) -- ++(3,0);};
\foreach \x in {-6, -2,2} {\draw[nonnormal] ( \x, .5) -- ++(0, -4.5);};

\foreach \y in {0, -1.3,-2.8} {\draw[exceptional, thin, dotted] ( -6.25, \y) -- ++(4.5, 0);};

\begin{scope}[xshift=-8cm]
\node[below left] at (-2,0) {$\alpha$};
\node[left] at (-2,-1.3) {$\beta$};
\node[left] at (-2,-2.8) {$\gamma$};
\end{scope}
\begin{scope}[xshift = 4cm]
\node[below right] at (2,0) {$\beta$};
\node[right] at (2,-1.3) {$\gamma$};
\node[right] at (2,-2.8) {$\alpha$};
\end{scope}

\end{scope}

\begin{scope}[exceptional, xshift =-8cm]
\clip (-2.25, 1) rectangle (2.25, -5);
\draw  (-2.25, 0) -- (2.25, 0);

\draw ( -1.25, -2)++(-135: .25cm) -- ( -1.25, -2) to[out = 45, in = 135] 
 (0, -2)-- ++(-45: .25cm);
\draw ( 0, -2)++(-135: .25cm) -- (0, -2) to[out = 45, in =135] 
 (2.25, -1.42);

\draw ( -1.25, -2)++(-45: .25cm) --   ++(135:2cm);

\draw ( 1.25, -2)++(135: .25cm) --   ++(-45:3cm);
\draw ( 1.25, -2)++(45: .25cm) -- ( 1.25, -2) to[out = -135, in = -45, looseness = 2]  (-2.25, -2.5);
\end{scope}
\draw[exceptional]  (-2.25, 0) -- (2.25, 0);
\begin{scope}[exceptional, cm ={1,0,0,-1,(0,-4)}]
\clip (-2.25, 1) rectangle (2.25, -5);

\draw ( -1.25, -2)++(-135: .25cm) -- ( -1.25, -2) to[out = 45, in = 135] 
 (0, -2)-- ++(-45: .25cm);
\draw ( 0, -2)++(-135: .25cm) -- (0, -2) to[out = 45, in =135] 
 (2.25, -1.42);

\draw ( -1.25, -2)++(-45: .25cm) --   ++(135:2cm);

\draw ( 1.25, -2)++(135: .25cm) --   ++(-45:3cm);
\draw ( 1.25, -2)++(45: .25cm) -- ( 1.25, -2) to[out = -135, in = -45, looseness = 2]  (-2.25, -2.5);
\end{scope}
\begin{scope}[exceptional, xshift =4cm]
\clip (-2.25, 1) rectangle (2.25, -5);
\draw  (-2.25, 0) -- (2.25, 0);

\draw ( -1.25, -2)++(-135: .25cm) -- ( -1.25, -2) to[out = 45, in = 135] 
 (0, -2)-- ++(-45: .25cm);
\draw ( 0, -2)++(-135: .25cm) -- (0, -2) to[out = 45, in =135] 
 (2.25, -1.42);

\draw ( -1.25, -2)++(-45: .25cm) --   ++(135:2cm);

\draw ( 1.25, -2)++(135: .25cm) --   ++(-45:3cm);
\draw ( 1.25, -2)++(45: .25cm) -- ( 1.25, -2) to[out = -135, in = -45, looseness = 2]  (-2.25, -2.5);
\end{scope}

\end{tikzpicture}
\end{figure}

 To get $X_{k,l}$ for $1-k<l\leq 1 $ we glue $1-l$ elementary tiles of type A to $k+l-1$ elementary tiles of type B as specified in Figure \ref{fig: chi small}. We read off from the graphical representation that $Y_{k,l}$ contains $c=k+l$ cycles of rational curves and thus $X_{k,l}$ has $k+l$ degenerate cusps.
\begin{figure}[h!]\caption{The surface $Y_{k,l}$ for $1-k<l\leq 1 $}\label{fig: chi small}
\scriptsize
 \begin{tikzpicture}
[curves/.style = {thick},
exceptional/.style = {red, thick}, 
nonnormal/.style ={very thick, blue},
boundary/.style={nonnormal, dashed},
scale = .5
]

\begin{scope}[curves]
\draw[boundary] (-16,.5) node[right] {$L_{12}$}  -- ++(0, -4.5);
\draw[boundary] (8,.5) node[right] {$L_{12}$} -- ++(0, -4.5);
\foreach \x in {-15.5, -9.5,-5.5,  -1.5,4.5} {\draw[nonnormal] (\x, -2) -- ++(3,0);};
\foreach \x in {-12, -10, -6, -2,2, 4} {\draw[nonnormal] ( \x, .5) -- ++(0, -4.5);};
\end{scope}

\begin{scope}[exceptional, thin, dotted]
\foreach \y in {0, -1.3,-2.8} {\draw( 1.75, \y) -- ++(2.5, 0);};
\draw( -12, 0) -- ++(4, 0);
\draw ( -12, -1.3)++(135: .25cm) -- ( -12, -1.3) to[out = -45, in = 45] 
 (-12, -2.8)-- ++(-135: .25cm);
\draw ( -10, -1.3)++(45: .25cm) -- ( -10, -1.3) to[out = -135, in = 135] 
 (-10, -2.8)-- ++(-45: .25cm);
\node[black] at (-11, -2) {\dots}; 
\end{scope}

\begin{scope}[xshift=-14cm]
\node[below left] at (-2,0) {$\alpha$};
\node[left] at (-2,-1.3) {$\beta$};
\node[left] at (-2,-2.8) {$\gamma$};
\end{scope}
\begin{scope}[xshift = 6cm]
\node[below right] at (2,0) {$\alpha$};
\node[right] at (2,-1.3) {$\beta$};
\node[right] at (2,-2.8) {$\gamma$};
\end{scope}

\begin{scope}[exceptional, xshift = -14cm]
\clip (-2.25, 1) rectangle (2.25, -5);
\draw  (-2.25, 0) -- (2.25, 0);

\draw ( -1.25, -2)++(-135: .25cm) -- ( -1.25, -2) to[out = 45, in = 135]  (0, -2)-- ++(-45: .25cm);
\draw ( 0, -2)++(45: .25cm) -- (0, -2) to[out = -135, in = 45]  (-2.25, -2.95);

\draw ( -1.25, -2)++(-45: .25cm) --   ++(135:2cm);

\draw ( 1.25, -2)++(-135: .25cm) --   ++(45:2cm);
\draw ( 1.25, -2)++(135: .25cm) --   ++(-45:3cm);

\end{scope}

\begin{scope}[exceptional, xshift  =-8cm]
\clip (-2.25, 1) rectangle (2.25, -5);
\draw  (-2.25, 0) -- (2.25, 0);

\draw ( -1.25, -2)++(-135: .25cm) -- ( -1.25, -2) to[out = 45, in = 135] 
 (0, -2)-- ++(-45: .25cm);
\draw ( 0, -2)++(45: .25cm) -- (0, -2) to[out = -135, in = 45] 
 (-2.25, -2.95);

\draw ( -1.25, -2)++(-45: .25cm) --   ++(135:2cm);

\draw ( 1.25, -2)++(-135: .25cm) --   ++(45:2cm);
\draw ( 1.25, -2)++(135: .25cm) --   ++(-45:3cm);

\end{scope}
\begin{scope}[exceptional, xshift =-4cm]
\clip (-2.25, 1) rectangle (2.25, -5);
\draw  (-2.25, 0) -- (2.25, 0);

\draw ( -1.25, -2)++(-135: .25cm) -- ( -1.25, -2) to[out = 45, in = 135] 
 (0, -2)-- ++(-45: .25cm);
\draw ( 0, -2)++(45: .25cm) -- (0, -2) to[out = -135, in = 45] 
 (-2.25, -2.95);

\draw ( -1.25, -2)++(-45: .25cm) --   ++(135:2cm);

\draw ( 1.25, -2)++(-135: .25cm) --   ++(45:2cm);
\draw ( 1.25, -2)++(135: .25cm) --   ++(-45:3cm);

\end{scope}

\draw[exceptional]  (-2.25, 0) -- (2.25, 0);
\begin{scope}[exceptional, cm ={1,0,0,-1,(0,-4)}]
\clip (-2.25, 1) rectangle (2.25, -5);

\draw ( -1.25, -2)++(-135: .25cm) -- ( -1.25, -2) to[out = 45, in = 135] 
 (0, -2)-- ++(-45: .25cm);
\draw ( 0, -2)++(-135: .25cm) -- (0, -2) to[out = 45, in =135] 
 (2.25, -1.42);

\draw ( -1.25, -2)++(-45: .25cm) --   ++(135:2cm);

\draw ( 1.25, -2)++(135: .25cm) --   ++(-45:3cm);
\draw ( 1.25, -2)++(45: .25cm) -- ( 1.25, -2) to[out = -135, in = -45, looseness = 2]  (-2.25, -2.5);
\end{scope}
\begin{scope}[exceptional, xshift =6cm]
\clip (-2.25, 1) rectangle (2.25, -5);
\draw  (-2.25, 0) -- (2.25, 0);

\draw ( -1.25, -2)++(-135: .25cm) -- ( -1.25, -2) to[out = 45, in = 135] 
 (0, -2)-- ++(-45: .25cm);
\draw ( 0, -2)++(-135: .25cm) -- (0, -2) to[out = 45, in =135] 
 (2.25, -1.42);

\draw ( -1.25, -2)++(-45: .25cm) --   ++(135:2cm);

\draw ( 1.25, -2)++(135: .25cm) --   ++(-45:3cm);
\draw ( 1.25, -2)++(45: .25cm) -- ( 1.25, -2) to[out = -135, in = -45, looseness = 2]  (-2.25, -2.5);
\end{scope}

\end{tikzpicture}
\end{figure}
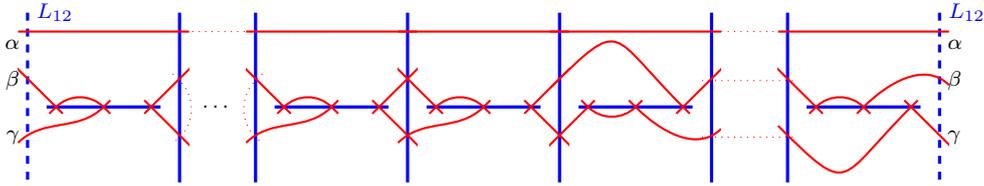

 To get $X_{k,l}$ for $2\leq l\leq k+1 $ we glue $l-1$ elementary tiles of type D to $k-l+1$ elementary tiles of type B as specified in Figure \ref{fig: chi big}. We read off from the graphical representation that $Y_{k,l}$ contains $c=k+l$ cycles of rational curves and thus $X_{k,l}$ has $k+l$ degenerate cusps also in this case.
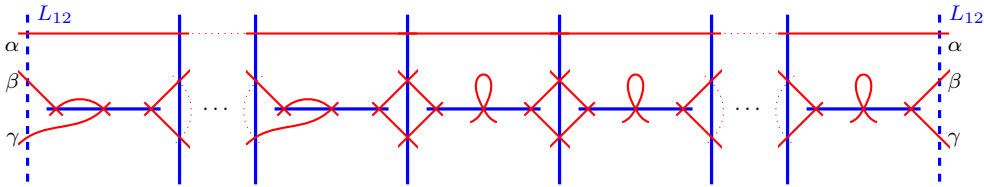
\begin{figure}[h!]\caption{The surface $Y_{k,l}$ for $2\leq l\leq k+1 $}\scriptsize\label{fig: chi big}
 \begin{tikzpicture}
[curves/.style = {thick},
exceptional/.style = {red, thick}, 
nonnormal/.style ={very thick, blue},
boundary/.style={nonnormal, dashed},
scale = .5
]

\begin{scope}[curves]
\draw[boundary] (-16,.5) node[right] {$L_{12}$}  -- ++(0, -4.5);
\draw[boundary] (8,.5) node[right] {$L_{12}$} -- ++(0, -4.5);
\foreach \x in {-15.5, -9.5,-5.5,  -1.5,4.5} {\draw[nonnormal] (\x, -2) -- ++(3,0);};
\foreach \x in {-12, -10, -6, -2,2, 4} {\draw[nonnormal] ( \x, .5) -- ++(0, -4.5);};
\end{scope}

\begin{scope}[exceptional, thin, dotted]
\draw( -12, 0) -- ++(2, 0);
\draw ( -12, -1.3)++(135: .25cm) -- ( -12, -1.3) to[out = -45, in = 45] 
 (-12, -2.8)-- ++(-135: .25cm);
\draw ( -10, -1.3)++(45: .25cm) -- ( -10, -1.3) to[out = -135, in = 135] 
 (-10, -2.8)-- ++(-45: .25cm);
\draw( 2, 0) -- ++(2, 0);
\draw ( 2, -1.3)++(135: .25cm) -- ( 2, -1.3) to[out = -45, in = 45] 
 (2, -2.8)-- ++(-135: .25cm);
\draw ( 4, -1.3)++(45: .25cm) -- ( 4, -1.3) to[out = -135, in = 135] 
 (4, -2.8)-- ++(-45: .25cm);
\node[black] at (-11, -2) {\dots}; 
\node[black] at (3, -2) {\dots}; 

\end{scope}

\begin{scope}[xshift=-14cm]
\node[below left] at (-2,0) {$\alpha$};
\node[left] at (-2,-1.3) {$\beta$};
\node[left] at (-2,-2.8) {$\gamma$};
\end{scope}
\begin{scope}[xshift = 6cm]
\node[below right] at (2,0) {$\alpha$};
\node[right] at (2,-1.3) {$\beta$};
\node[right] at (2,-2.8) {$\gamma$};
\end{scope}

\begin{scope}[exceptional, xshift = -14cm]
\clip (-2.25, 1) rectangle (2.25, -5);
\draw  (-2.25, 0) -- (2.25, 0);

\draw ( -1.25, -2)++(-135: .25cm) -- ( -1.25, -2) to[out = 45, in = 135]  (0, -2)-- ++(-45: .25cm);
\draw ( 0, -2)++(45: .25cm) -- (0, -2) to[out = -135, in = 45]  (-2.25, -2.95);

\draw ( -1.25, -2)++(-45: .25cm) --   ++(135:2cm);

\draw ( 1.25, -2)++(-135: .25cm) --   ++(45:2cm);
\draw ( 1.25, -2)++(135: .25cm) --   ++(-45:3cm);

\end{scope}

\begin{scope}[exceptional, xshift  =-8cm]
\clip (-2.25, 1) rectangle (2.25, -5);
\draw  (-2.25, 0) -- (2.25, 0);

\draw ( -1.25, -2)++(-135: .25cm) -- ( -1.25, -2) to[out = 45, in = 135] 
 (0, -2)-- ++(-45: .25cm);
\draw ( 0, -2)++(45: .25cm) -- (0, -2) to[out = -135, in = 45] 
 (-2.25, -2.95);

\draw ( -1.25, -2)++(-45: .25cm) --   ++(135:2cm);

\draw ( 1.25, -2)++(-135: .25cm) --   ++(45:2cm);
\draw ( 1.25, -2)++(135: .25cm) --   ++(-45:3cm);

\end{scope}
\begin{scope}[exceptional, xshift =-4cm]
\clip (-2.25, 1) rectangle (2.25, -5);
\draw  (-2.25, 0) -- (2.25, 0);
\draw[ every loop/.style={looseness=40, min distance=40}]
 (0,-2) ++ (-45: .5cm) to[out=160, in=-60] (0, -2) to[out=120, in =60,loop] () to[out=-120, in=20] ++(225:0.5cm);

\draw ( -1.25, -2)++(-45: .25cm) --   ++(135:2cm);
\draw ( -1.25, -2)++(45: .25cm) --   ++(-135:3cm);

\draw ( 1.25, -2)++(-135: .25cm) --   ++(45:2cm);
\draw ( 1.25, -2)++(135: .25cm) --   ++(-45:3cm);
\end{scope}

\begin{scope}[exceptional,]
\clip (-2.25, 1) rectangle (2.25, -5);
\draw  (-2.25, 0) -- (2.25, 0);
\draw[ every loop/.style={looseness=40, min distance=40}]
 (0,-2) ++ (-45: .5cm) to[out=160, in=-60] (0, -2) to[out=120, in =60,loop] () to[out=-120, in=20] ++(225:0.5cm);

\draw ( -1.25, -2)++(-45: .25cm) --   ++(135:2cm);
\draw ( -1.25, -2)++(45: .25cm) --   ++(-135:3cm);

\draw ( 1.25, -2)++(-135: .25cm) --   ++(45:2cm);
\draw ( 1.25, -2)++(135: .25cm) --   ++(-45:3cm);
\end{scope}
\begin{scope}[exceptional, xshift =6cm]
\clip (-2.25, 1) rectangle (2.25, -5);
\draw  (-2.25, 0) -- (2.25, 0);
\draw[ every loop/.style={looseness=40, min distance=40}]
 (0,-2) ++ (-45: .5cm) to[out=160, in=-60] (0, -2) to[out=120, in =60,loop] () to[out=-120, in=20] ++(225:0.5cm);

\draw ( -1.25, -2)++(-45: .25cm) --   ++(135:2cm);
\draw ( -1.25, -2)++(45: .25cm) --   ++(-135:3cm);

\draw ( 1.25, -2)++(-135: .25cm) --   ++(45:2cm);
\draw ( 1.25, -2)++(135: .25cm) --   ++(-45:3cm);
\end{scope}

\end{tikzpicture}
\end{figure}

 \subsubsection{The surface $ X_{2,4}$}
The last case cannot be constructed by the same strategy as before. But instead,  to get $X_{2,4}$ we just take two copies of $(\IP^2, \text{nodal quartic curve})$ and let the involution exchange the two curves. The resulting surface has $K_{X_{2,4}}^2 = 2$ and $\chi(\ko_{X_{2,4}}) = 4$; it is a degeneration of a Horikawa surface.

\subsubsection{Smoothability}\label{sect: smoothability}  Locally all constructed surfaces are smoothable by Remark \ref{rem: sings}. Global smoothability is tricky: we have seen above that $X_{1,3}$ is smoothable but on the other hand $X_{9,1}$ cannot be smoothable because minimal surfaces on the Bogomolov-Miyaoka-Yau line are rigid ball quotients (See also Section \ref{sect: example fake}).

\subsubsection{Further variations}
Especially for intermediate values of the invariants there are several other choices of glueing that realise the same invariants. For example, every elementary tile  of type A could be replaced by one of type C thereby increasing the number of degenerate cups and hence $\chi$ by one.  Possibly the resulting surfaces would have different irregularity or geometric genus, but we did not venture into this.

\subsubsection{Non-Gorenstein surfaces}
If we allow the involution $\tau$ to preserve a component of the conductor divisor then it necessarily fixes one of the three marked points. By the classification of slc singularities,  the resulting surface is not Gorenstein but has index two. 

From our building blocks we can also construct non-Gorenstein stable surfaces of index two that violate the stable Noether inequality. To illustrate this we  construct a stable surface  $X_{3, 5}$ with $K_{X_{3,5}}^2=3$ and $\chi(\ko_{X_{3,5}})=5$ given in  Figure \ref{fig: X35}. The two lines $L_1$ and $L_2$ are pinched: on each preimage in the involution one of the marked points is fixed and the other two are exchanged. The two fixed points on these lines give pinch points in the semi-smooth surface $Y_{3,5}$, which are marked by black dots in the picture.

\begin{figure}[ht]\caption{The surface $Y_{3,5}$; the lines $L_1$ and $L_2$ are pinched. }
\label{fig: X35}
\scriptsize
 \begin{tikzpicture}
[curves/.style = {thick},
exceptional/.style = {red, thick}, 
nonnormal/.style ={very thick, blue},
boundary/.style={nonnormal, dashed},
scale = .5
]

\begin{scope}[curves]
\foreach \x in { -9.5,-5.5,  -1.5} {\draw[nonnormal] (\x, -2) -- ++(3,0);};
\foreach \x in {-10,  -6, -2,2} {\draw[nonnormal] ( \x, .5) -- ++(0, -4.5);};
\end{scope}

\begin{scope}[exceptional, xshift  =-8cm]
\clip (-2.25, 1) rectangle (2.25, -5);
\draw  (-2.25, 0) -- (2.25, 0);
\draw[ every loop/.style={looseness=40, min distance=40}]
 (0,-2) ++ (-45: .5cm) to[out=160, in=-60] (0, -2) to[out=120, in =60,loop] () to[out=-120, in=20] ++(225:0.5cm);

\draw ( -2, -2)++(-135: .25cm) -- ( -2, -2) to[out = 45, in = 135]  ++(1, 0)-- ++(-45: .25cm);
\draw ( -2, -2)++(135: .25cm) -- ( -2, -2) to[out = -45, in = -135]  ++(1, 0)-- ++(45: .25cm);

\draw ( 1.25, -2)++(-135: .25cm) --   ++(45:2cm);
\draw ( 1.25, -2)++(135: .25cm) --   ++(-45:3cm);
\end{scope}
\begin{scope}[exceptional, xshift =-4cm]
\clip (-2.25, 1) rectangle (2.25, -5);
\draw  (-2.25, 0) -- (2.25, 0);
\draw[ every loop/.style={looseness=40, min distance=40}]
 (0,-2) ++ (-45: .5cm) to[out=160, in=-60] (0, -2) to[out=120, in =60,loop] () to[out=-120, in=20] ++(225:0.5cm);

\draw ( -1.25, -2)++(-45: .25cm) --   ++(135:2cm);
\draw ( -1.25, -2)++(45: .25cm) --   ++(-135:3cm);

\draw ( 1.25, -2)++(-135: .25cm) --   ++(45:2cm);
\draw ( 1.25, -2)++(135: .25cm) --   ++(-45:3cm);
\end{scope}
\begin{scope}[exceptional]
\clip (-2.25, 1) rectangle (2.25, -5);
\draw  (-2.25, 0) -- (2.25, 0);
\draw[ every loop/.style={looseness=40, min distance=40}]
 (0,-2) ++ (-45: .5cm) to[out=160, in=-60] (0, -2) to[out=120, in =60,loop] () to[out=-120, in=20] ++(225:0.5cm);

\draw ( -1.25, -2)++(-45: .25cm) --   ++(135:2cm);
\draw ( -1.25, -2)++(45: .25cm) --   ++(-135:3cm);

\draw ( 2, -2)++(-45: .25cm) -- ( 2, -2) to[in = 45, out = 135]  ++(-1, 0)-- ++(-135: .25cm);
\draw ( 2, -2)++(45: .25cm) -- ( 2, -2) to[in = -45, out = -135]  ++(-1, 0)-- ++(135: .25cm);
\end{scope}

\fill[black] (-10, 0) circle (3pt) node[above left] {$L_1$};
\fill[black] (2, 0) circle (3pt) node[above right] {$L_2$};

\fill[black] (-10, -3.5) circle (3pt);
\fill[black] (2, -3.5) circle (3pt);

\end{tikzpicture}
\end{figure}
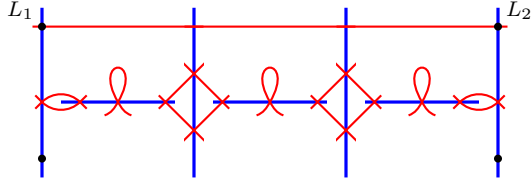

\subsection{Normal geographical examples}\label{sect: exam normal}
 Let $C$ and $C'$ be elliptic curves and let $S = C\times C'$ and fix integers $k,l>0$. Pick general points $P_1, \dots P_k\in C$ and $Q_1, \dots, Q_l\in C'$. Let  
\begin{gather*}
C_i = C\times\{Q_i\}\quad i = 1, \dots, k,\,
 C_j' = \{P_j\}\times C'\quad j = 1, \dots, l.
\end{gather*}

Blowing up the $k\cdot l$ points $(P_i, Q_j)$ in $S$ we get a surface $Y$ with $(-1)$-curves $E_{1,1}, \dots, E_{k,l}$. If $E_i$ (resp.~$E_j'$) is the strict transform of $C_i$ (resp.~$C_j'$) then $E_i$ and $E_j'$ are smooth elliptic curves with 
\[E_i^2 = -l \text{ and } E_j'^2 = -k.\]

We construct a surface $X_{k,l}$ with $k+l$ elliptic singularities by contracting all $E_i$ and $E_j'$:
\[
\begin{tikzcd}
 {}& Y\arrow{dr}{\sigma}\arrow{dl}[swap]{\pi}\\
S && X_{kl}
\end{tikzcd}
\]
The surface $X_{k,l}$ exists as an algebraic space. To prove that it is a stable surface it is enough to show that the Cartier divisor $K_{X_{k,l}}$ is ample, for which we use the Nakai--Moishezon criterion (\cite[Thm.~3.11]{kollar90}).
First note that
\[ K_{X_{k,l}}^2 = \left(K_Y+\sum_i E_i + \sum_j E_j'\right)^2 = \left(\pi^*(\sum_i C_i + \sum_j C_j') -\sum_{i,j} E_{i,j}\right)^2 =kl>0\]
 So let $F$ be an irreducible curve in $X_{k,l}$. Its strict transform $\bar F \subset  Y$ either is one of the $\pi$-exceptional curves or is the strict transform of a curve in $S$. In both cases
\[K_XF = \left(\pi^*(\sum_i C_i + \sum_j C_j')-\sum_{i,j} E_{i,j}\right)\bar F >0\]
and we are done.

To compute $\chi(\ko_{X_{k,l}})$ note that
\[ 0 = \chi(\ko_Y ) = \chi(\ko_{\bar Y}) = \chi(R\sigma_*\ko_{\bar Y}) = \chi(\ko_{X_{k,l}}) -\chi(R^1\sigma_*\ko_{\bar Y}) = \chi(\ko_{X_{k,l}}) -(k+l),\]
because $R^1\sigma_*\ko_{\bar Y}$ has length 1 at each elliptic singular point \cite[Chapter 4]{Reid97}.
Thus we have constructed a normal stable surfaces $X_{k,l}$ such that 
\[\chi(\ko_{X_{k,l}})=k+l\text{ and }  K_{X_{k,l}}^2=kl.\]
In particular, $\chi(\ko_{X_{k,1}})=k+1 =K_{X_{k,l}}^2+1$, which is the ``equality +1'' case of the stable Noether inequality, and $K_{X_{k,k}}^2=k^2> 9\chi(\ko_{X_{k,k}})=18k$ for $k>18$, which confirms that the classical Bogomolov--Miyaoka--Yau inequality does not hold.

\subsection{Further examples}

\subsubsection{Irregularity}
Here we give two examples that show that the irregularity of the normalisation may be larger or smaller than the irregularity of a stable surface.
\begin{exam}[Drop of irregularity]
 Let $(\bar X, \bar D)$ be a principally polarised abelian surface. Then $\bar D$ is a curve of genus two and thus there is a hyperelliptic involution $\tau$ on $\bar D$. 

The  stable surface $X$ correponding to the triple $(\bar X, \bar D, \tau)$ has $q(X)=0$ while $q(\bar X)=2$.
\end{exam}

\begin{exam}[Increase of irregularity]
 In the series of surfaces constructed in Section \ref{sect: big example} we have $\chi(\ko_{X_{1,0}})=0$ so $q(X_{1,0})\geq1$, while on the other hand $q(\bar X_{1,0})=q(\IP^2)=0$.
\end{exam}

\subsubsection{Canonical map}
Here we note some pathologies of the canonical map that make the classical strategy to prove Noether's inequality fail for stable surfaces.
\begin{exam}[The image of the canonical map need not be equidimensional]
  Let $\bar X_1$ be a (smooth) del Pezzo surface of degree 1 and $\bar D$ a nodal curve in $|-2K_{\bar X_1}|$. Pick a smooth surface $\bar X_2$ with the following properties: $\bar D$ is contained in $\bar X_2$ and $\omega_{\bar X_2}(\bar D)$ is very ample and $H^1(\bar X_2, \omega_{\bar X_2} )=0$. In particular $H^0(\omega_{\bar X_2}(\bar D) )$ surjects onto $H^0(\bar D, \omega_{\bar D})$. 

We construct a stable surface $X$ by gluing $\bar X_1$ and $\bar X_2$ along $ \bar D$. By Proposition~\ref{prop: descend section} all sections of $H^0(\bar X_1, \omega_{\bar X_1}(\bar D)) = H^0(\bar X_1, \inverse\omega_{\bar X_1}) $ descend to sections of $\omega_X$. So the image of the canonical map restricted to $ X_1$ is a $\IP^1$ while the image of the canonical map restricted to $X_2$ is a surface.
\end{exam}

\begin{exam}[The image of the canonical map need not be connected]
Let $C$ be a curve of genus at least three which is not hyperelliptic. Glue two copies of $C$ along a point $p\in C$ to get a stable curve $C'$. A straightforward computation shows that the canonical map of $C'$ has a base-point at $p$ and its image is the disjoint union of two copies of $C$ in the canonical embedding.

Now consider the stable surface $X =C'\times C$. As a consequence of the above,  the base locus of the canonical map coincides with the non-normal locus and the image of the canonical map are two copies of $C\times C$ in the canonical embedding. We see that the canonical map is birational while nevertheless its image is not connected. 
\end{exam}

\subsubsection{A family of (fake) fake projective planes}\label{sect: example fake}

 The following example was asked for by Matthias Sch\"utt. It shows explicitly that  we should not expect stable surfaces to exhibit a behaviour similar to smooth surfaces with the same invariants. Concretely, the Gieseker moduli space of surfaces of general type with $K_X^2 =9 $ and $\chi(\ko_X)=1$, whose elements are usually called fake projective planes, consists of isolated points. We will now construct a 1-dimensional family of stable surfaces with the same invariants thus showing that the number of components of the moduli space of stable surfaces goes up and not all stable surfaces with these invariants are rigid.

Let $\bar X_\alpha= \bar X_\beta = \IP^1\times \IP^1$ and $\bar X_\gamma  = \IP^2$ and $\bar X  = \bar X_\alpha\sqcup \bar X_\beta \sqcup \bar X_\gamma$. 
Fix in both $\bar X_\alpha$ and $\bar X_\beta$ the same four horizontal $H_{x,1}, \dots, H_{x, 4}$ and three vertical lines $V_{x,1}, \dots, V_{x,3}$ ($x=\alpha, \beta$) and fix four general lines $L_1, \dots, L_4 \subset \IP^2 = \bar X_\gamma$.

In order to construct a stable surface $X$ we specify an involution $\tau$ on 
\[\bar D^\nu=\bigsqcup_{x = \alpha, \beta}\left(H_{x,1}\sqcup \dots\sqcup H_{x, 4}\sqcup V_{x,1}\sqcup \dots\sqcup V_{x,3} \right)\sqcup L_1\sqcup \dots \sqcup L_4\]
in the following way: first we use the identity $\bar X_\alpha = \bar X_\beta$ to identify
\begin{gather*}
 \tau\colon V_{\alpha,  i } \longleftrightarrow V_{\beta, i} \qquad (i = 1,2,3)\\
\tau\colon H_{\alpha,  i } \longleftrightarrow H_{\beta, i} \qquad (i = 1,2)\\
\end{gather*}
The remaining components all contain 3 marked points for the different and we specify how to glue them by specifying an involution on these points. Points are denoted by the same symbol if they either map to the same node in $\bar X$ in the case of the $L_i$ or if they are identified in the quotient via the gluing of the vertical lines specified above for $H_{\alpha, i}, H_{\beta, i}$. Note that the order of the points is important for result of the gluing.
\begin{gather*}
 \tau\colon H_{\alpha, 3}=\langle a,b,c\rangle\longleftrightarrow \langle 1,2,3\rangle=L_1,\\ 
\tau\colon H_{\beta,  3}=\langle a,b,c\rangle\longleftrightarrow \langle 3,4,5\rangle=L_2,\\
\tau\colon H_{\alpha,  4}=\langle d,e,f\rangle\longleftrightarrow \langle 2,5,6\rangle=L_3,\\
\tau\colon H_{\beta,  4}=\langle d,e,f\rangle\longleftrightarrow \langle 1,6,4\rangle=L_4.
\end{gather*}
 Since \eqref{diagr: pushout} is  a pushout diagram we see that $D$ has 7 singular points: six arise as images of the nodes of $H_{\alpha,1} \cup H_{\alpha,2}\cup V_{\alpha,1}\cup V_{\alpha,2}\cup V_{\alpha,3}$ and the other  is the equivalence classes of the point $a$. The latter has multiplicity 18.
 
Using the normalisation $D^\nu$ we compute $\chi(\ko_D)= 9-17-6=-14$, $\chi(\ko_{\bar D})=-5-5-2=-12$ and consequently, by Proposition \ref{prop: invariants},
\[\chi(\ko_X) = 3+(-14)-(-12)=1,  \quad K_X^2=(K_{\bar X} +\bar D)^2 = 9.\]
Note however that we can vary the cross ratio of the four points in $\IP^1$ corresponding to the 4 horizontal components $H_{x,i}$, thus we have a 1-dimensional family of deformations of $X$.

The surface $X$ is locally smoothable but not globally because all smooth fake projective planes are rigid ball quotients \cite{BHPV}.

\end{document}